\documentclass[11pt]{article}
\usepackage{amsfonts,amsmath,tikz}
\usepackage{epsfig}
\usepackage{latexsym}
\usepackage{color}
\usepackage{epsf,amssymb}

\textwidth=6.0in \textheight=8.5in \evensidemargin=0in
\oddsidemargin=0in \topmargin=0in \topskip=0pt \baselineskip=12pt
\parskip=8pt
\parindent=1em

\newtheorem{thm}{Theorem}[section]
\newtheorem{ob}[thm]{Observation}

\newtheorem{cor}[thm]{Corollary}
\newtheorem{lem}[thm]{Lemma}
\newtheorem{prop}[thm]{Proposition}

\def\vertex(#1){\put(#1){\circle*{2}}}
\def\vertexo(#1){\put(#1){\circle{2}}}
\def\vert(#1){\put(#1){\circle*{1.5}}}
\def\verto(#1){\put(#1){\circle{1.5}}}
\def\lab(#1)#2{\put(#1){\makebox(0,0)[c]{#2}}}
\setlength{\unitlength}{1mm}

\newenvironment{proof}[1][Proof]{\textbf{#1.} }{\ \rule{0.5em}{0.5em}}

\newcommand{\cart}{\, \Box \,}

\newcommand{\rk}{\gamma_{{\rm r}k}}
\newcommand{\rik}{\gamma_{{\rm ri}k}}
\newcommand{\ri}{\gamma_{{\rm ri}2}}
\newcommand{\rd}{\gamma_{{\rm r}2}}

\newcommand{\cp}{\,\square\,}

\begin{document}

\title{On $k$-rainbow independent domination in graphs}

\author{
Tadeja Kraner \v{S}umenjak \footnote{ FKBV, University of Maribor,
Maribor, Slovenia. The author is also with the Institute  of
Mathematics, Physics and Mechanics, Jadranska 19, 1000 Ljubljana.
email: tadeja.kraner@um.si}  \and Douglas F. Rall \footnote{Furman University, Greenville, SC,
USA. e-mail: doug.rall@furman.edu}
 \and Aleksandra Tepeh\footnote{
FEECS, University of Maribor, Maribor, Slovenia. The author is also
with the Faculty of Information Studies, 8000 Novo Mesto, Slovenia.
email: aleksandra.tepeh@um.si }
 }

\date{}
\maketitle

\begin{abstract}

 In this paper, we define a new domination invariant on a graph $G$, which coincides with the ordinary independent domination
number of the generalized  prism $G \cart K_k$, called  the {\em
$k$-rainbow independent domination number} and denoted by $\rik(G)$. Some  bounds and exact values concerning this domination concept are determined. As a main result, we prove a  Nordhaus-Gaddum-type theorem on the sum for $2$-rainbow independent domination number, and show if G is a graph of order $n \ge 3$, then $5\leq \ri(G)+\ri(\overline{G})\leq n+3$, with both bounds being sharp.
\end{abstract}

{\small {\bf Keywords:} domination, $k$-rainbow independent domination, Nordhaus-Gaddum} \\
\indent {\small {\bf AMS subject classification: 05C69}}

\section{Introduction}

Domination in graphs has been an extensively researched branch of graph theory. Already in
\cite{fund-1998} more than 75 variations of domination were cited, and many more have been introduced since then.
This is not surprising as many of these concepts found applications in different fields,
for instance in  facility location problems, monitoring communication or
electrical networks, land surveying, computational biology, etc. Recent studies on domination and related concepts include also \cite{N2,du-2017,N1,  j-2017,shao}.
Although our original motivation for defining a new invariant arises from a desire to reduce the problem of computing the
independent domination number of the generalized prism $G \cart K_k$ to an integer labeling problem on a graph $G$, the new concept can also be seen as a model for a problem in combinatorial optimization.

In introducing this new concept we first mention that
in general we follow the notation and graph theory terminology
in~\cite{hik-2011}. Specifically, let $G$ be a finite, simple graph
with vertex set $V(G)$ and edge set $E(G)$. By $G\langle A\rangle$ we denote the subgraph of $G$
induced by the vertex set $A\subseteq V(G)$. For a positive integer $n$ with $n \ge2$ we denote an empty graph on $n$ vertices
by $N_n$ and a star of order $n$ by $S_n$.  The graph  obtained from $S_n$ by adding a single edge
is denoted $S_n^+$.
For any vertex $g$ in $G$, the {\em open neighborhood of
$g$}, written $N(g)$, is the set of vertices adjacent to $g$.  The
{\em closed neighborhood} of $g$ is the set $N[g] = N(g) \cup
\{g\}$. The vertex $g$ is a {\em universal} vertex of  $G$ if $N[g]=V(G)$.
If $A\subset V(G)$, then $N(A)$ (respectively, $N[A]$) denotes
the union of the open (closed) neighborhoods of all vertices of $A$. (In
the event that the graph $G$ under consideration is not clear we
write $N_G(g)$, and so on.)

The \emph{Cartesian product}, $G\Box H$, of graphs $G$ and $H$ is a
graph with $V(G\Box H)=V(G)\times V(H)$, where two vertices $(g,h)$ and
$(g',h')$ are adjacent in $G\Box H$ whenever
($gg'\in E(G)$ and $h=h'$) or ($g=g'$ and
$hh'\in E(H)$). For a fixed $h\in V(H)$ we call
$G^{h}=\{(g,h)\in V(G\Box H):g\in V(G)\}$ a $G$-\emph{layer} in
$G\Box H$. Similarly, an $H$-layer $^{g}\!H$ for a fixed $g\in V(G)$
is defined as $^{g}\!H=\{(g,h)\in V(G\Box H):h\in V(H)\}$. Notice
that the subgraph of $G\Box H$ induced by a $G$-layer or an
$H$-layer is isomorphic to $G$ or $H$, respectively.

If $A$ and $B$ are any two nonempty subsets of
$V(G)$ we say $A$ dominates $B$ if $B \subseteq N[A]$.  If $A$ dominates $V(G)$ then
 $A$ is a {\em dominating set} of $G$.  The {\em domination number} of $G$, denoted
by $\gamma(G)$, is the minimum cardinality of a dominating set of
$G$.  An {\em independent set} of a graph is a set of vertices, no two of
which are adjacent. An {\em independent dominating set} of $G$ is a
set that is both dominating and independent in $G$. This set is also called a {\em stable set}
or a {\em kernel} of the graph $G$. The {\em independent domination number}, $i(G)$, of a graph $G$ is the size of
a smallest independent dominating set. The {\em independence
number}  of $G$, denoted by $\alpha(G)$, is the maximum size of an
independent set in $G$. Observe that $\gamma(G) \leq i(G) \leq
\alpha(G)$.

\medskip
For a positive integer $k$ we denote the set $\{1,2,\ldots, k\}$ by
$[k]$.  In the remainder of this paper we will always assume
the vertex set of the complete graph $K_k$ is $[k]$.
The {\em power set} (that is, the set of all subsets) of
$[k]$ is denoted by $2^{[k]}$. Let $G$ be a graph and let $f$ be a
function that assigns to each vertex a subset of integers chosen
from the set $[k]$; that is, $f \colon V(G) \rightarrow 2^{[k]}$.
The {\em weight}, $\|f\|$, of  $f$ is defined as $\|f\| = \sum_{v\in
V(G)} |f(v)|$. The function $f$ is called a {\em $k$-rainbow
dominating function} ($k$RDF for short) of $G$ if for each vertex $v\in V(G)$ such that
$f(v)=\emptyset$ it is the case that
$$
\bigcup_{u\in N(v)} f(u) = \{1,\ldots,k\}\,.
$$

Given a graph $G$, the minimum
weight of a $k$-rainbow dominating function is called the {\em
$k$-rainbow domination number of $G$}, which we denote by $\rk(G)$.  Motivation for introducing this
concept arose from the observation that for $k\geq 1$ and for every graph $G$,
$\rk(G)=\gamma(G \cart K_k)$. See~\cite{bhr-2008}.  In other words, the problem of finding the
domination number of the Cartesian product $G \cart K_k$ is equivalent to an optimization problem involving
a restricted ``labeling'' of $V(G)$ with subsets of $[k]$.  In what follows we impose additional conditions
on this labeling in order to represent an independent dominating set of $G \cart K_k$.  Since the dominating set is
independent and the $K_k$-layers are complete, vertices cannot be labeled with a subset of cardinality more than 1.
This allows for each vertex of $G$ to be labeled by a single integer and leads us to the following.

\medskip

For a function $f: V(G) \to \{0,1,2,\ldots k\}$ we denote by $V_i$
the set of vertices to which the value $i$ is assigned by $f$, i.e.
$V_i=\{x\in V(G)\,:\, f(x)=i\}$. A function $f: V(G) \to
\{0,1,\ldots, k\}$ is called an {\em  $k$-rainbow independent
dominating function} ({\em $k$RiDF} for short) of $G$ if
the following two conditions hold:

\begin{enumerate}
\item $V_i$ is independent for $1 \le i \le k$, and
\item for every $x\in V_0$ it follows that $N(x)\cap V_i\neq \emptyset$, for every $i\in [k]$.
\end{enumerate}

Note that a $k$-rainbow independent dominating function $f$ can be
represented by the ordered partition $(V_0,V_1,\ldots,V_k)$ determined by $f$.
Hence, when it is convenient we simply work with the partition. The
weight of a $k$RiDF  $f$ is defined as $w(f)= \sum
_{i=1}^{k}|V_i|$, or equivalently $w(f)=n-|V_0|$, where $n$ is
the order of the graph. The {\em  $k$-rainbow independent domination
number} of a graph, denoted by $\rik(G)$, is the minimum
weight of a $k$RiDF of $G$. Note that
$\gamma_{ri1}(G)=i(G)$. A {\em $\rik(G)$-function} is a
$k$RiDF  of $G$ with weight $\rik(G)$, and a
{\em $k$RiDF-partition} of  $G$ is an ordered partition $(V_0,V_1,\ldots,V_k)$ of
$V(G)$ that represents a $\rik(G)$-function.

Chellali and Jafari Rad~\cite{cr-2015} introduced a graphical invariant using the name, independent
$2$-rainbow domination number, and later Shao et al.~\cite{shao} independently presented a natural generalization to the
invariant of Chellali and Jafari Rad for $k\ge 3$.  Our new invariant is quite different from these.
To explain these differences we provide their definition for the case $k=2$ and then to illustrate the differences we present
examples for the natural generalization for $k=3$.  In~\cite{cr-2015} and~\cite{shao} a $2$-rainbow dominating
function $f \colon V(G) \rightarrow 2^{[2]}$ is an \emph{independent $2$-rainbow dominating function} if the set
$\{x \in V(G) : f(x) \not= \emptyset\}$ is an independent subset of $G$. This induces a partition $(V_{\emptyset},V_1,V_2,V_{12})$
of $V(G)$ (in which $V_{12}=f^{-1}(\{1,2\})$) such that $V_1 \cup V_2\cup V_{12}$ is an independent dominating set of $G$, which has the additional
requirement that every vertex in $V_{\emptyset}$ either has a neighbor in $V_{12}$ or a neighbor in each of
$V_1$ and $V_2$.  The independent $2$-rainbow domination number (\cite{cr-2015}, \cite{shao}) is defined by
$i_{r2}(G)=\min\{|V_1|+|V_2|+2|V_{12}|\}$, where the minimum is computed over all partitions $(V_{\emptyset},V_1,V_2,V_{12})$ arising
from independent $2$-rainbow dominating functions of $G$. For $k \ge 2$, their independent $k$-rainbow domination number is defined
similarly.
Since $f$ is a $2$-rainbow dominating function, it does correspond to a dominating set of $G \cart K_2$,
but not always (in particular, when $V_{12}\not=\emptyset$) to an independent dominating set of $G \cart K_2$.
There are, of course, graphs $G$ such that
$i_{rk}(G)=\rik(G)$, but in general these invariants are incomparable.  For example, $\gamma_{ri3}(S_7)=6$
while $i_{r3}(S_7)=3$.  On the other hand, it is easy to verify that for the graph $G$ shown in Figure~\ref{fig:incomp},
$\gamma_{ri3}(G)=3$ and $i_{r3}(G)=4$.

\medskip
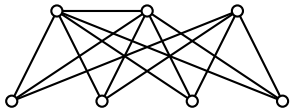
\begin{figure}[htb]
\begin{center}
\begin{tikzpicture}[scale=.6,style=thick,x=1cm,y=1cm]
\def\vr{3.5pt} % \vr = vertex radius;
% define vertices
\path (-1,2) coordinate (u0); \path (1,2) coordinate (u1); \path (3,2) coordinate (u2);
\path (-2,0) coordinate (v1); \path (0,0) coordinate (v2); \path (2,0) coordinate (v3);
\path (4,0) coordinate (v4);
%  edges
\foreach \i in {1,...,4}
{  \draw (u0)--(v\i); \draw (u1)--(v\i); \draw (u2)--(v\i);}
\draw (u0)--(u1);
% draw vertices
\foreach \i in {0,1,2}
{  \draw (u\i)  [fill=white] circle (\vr);}
\foreach \i in {1,2,3,4}
{ \draw (v\i)  [fill=white] circle (\vr);}
\end{tikzpicture}
\end{center}
\vskip -0.6 cm
\caption{$\gamma_{ri3}(G)=3$ and $i_{r3}(G)=4$} \label{fig:incomp}
\end{figure}

Our motivation for defining the new invariant, the $k$-rainbow independent domination number, is to reduce the problem of computing the
independent domination number of the generalized prism $G \cart K_k$ to an integer labeling problem on $G$.  Hence, we do not allow a vertex of
$G$ to receive a label containing more than one integer from $[k]$ nor do we require $V_1\cup \cdots \cup V_k$ to be independent.

The rest of this paper is organized as follows.  Section 2 is dedicated to basic properties and bounds concerning the $k$-rainbow independent domination. Then in Section 3 we study the $2$-rainbow independent domination number on some standard families of graphs and prove that for any non-trivial tree the $2$-rainbow independent domination number always exceeds the independent domination number. In Section 4 we establish the lower and the upper bound on the sum of  $2$-rainbow independent domination number of $G$ and of $\overline{G}$ in terms of the number of vertices of $G$, and in the last section we pose a few open problems.

\section{Basic properties and bounds}

The following observation follows directly from the definition of the $k$-rainbow independent domination
number.

\begin{ob} \label{ob:smallest}
If $G$ has order $n$ and $n \le k$, then $\rik(G)=n$.
\end{ob}

For a graph $G$ of order larger than $k$, the definition also implies that $\rik(G) \ge k$.  The following
result characterizes those graphs that have $k$-rainbow independent domination number equal to $k$.

\medskip

\begin{prop} \label{prop:lowerchar}
Let $k$ and $n$ be positive integers such that $n \ge k$.
For any connected graph $G$ of order $n$, $\rik(G)=k$ if and only if $n=k$ or $G$ has a spanning subgraph isomorphic
to $K_{k,n-k}$.
\end{prop}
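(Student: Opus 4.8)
The plan is to separate out the trivial case and then establish the two implications for $n>k$. First, if $n=k$ then Observation~\ref{ob:smallest} gives $\rik(G)=n=k$ at once, and the right-hand condition ``$n=k$'' holds as well, so the equivalence is immediate. I therefore assume $n>k$ throughout, where the right-hand side is just the statement that $G$ has a spanning subgraph isomorphic to $K_{k,n-k}$, and where I am free to use the bound $\rik(G)\ge k$ recorded after Observation~\ref{ob:smallest}.

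For the sufficiency, I would start from a spanning subgraph isomorphic to $K_{k,n-k}$, with parts $A=\{a_1,\dots,a_k\}$ and $B$ of sizes $k$ and $n-k$, and simply write down the labeling $f(a_i)=i$ for $i\in[k]$ and $f(b)=0$ for all $b\in B$. The verification is routine: each $V_i=\{a_i\}$ is a single vertex and hence independent, and every $x\in V_0=B$ is adjacent in the spanning $K_{k,n-k}$, and so in $G$, to each $a_i$, which is exactly condition~2. This produces a $k$RiDF of weight $k$, whence $\rik(G)\le k$; together with $\rik(G)\ge k$ this gives equality.

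For the necessity, I would take a $\rik(G)$-function $f$ with partition $(V_0,V_1,\dots,V_k)$ and weight $k$. Since $w(f)=n-|V_0|=k$ and $n>k$, the set $V_0$ is nonempty, so choosing any $x\in V_0$ and invoking condition~2 forces every $V_i$ with $i\in[k]$ to be nonempty. The key observation is then purely arithmetic: $|V_1|+\cdots+|V_k|=k$ is a sum of $k$ positive integers totalling $k$, so each $V_i$ must be a singleton, say $V_i=\{a_i\}$. With $A=\{a_1,\dots,a_k\}$ and $B=V_0$ fixed, condition~2 now says precisely that each vertex of $B$ is adjacent to every $a_i$; as $A$ and $B$ partition $V(G)$ with $|A|=k$ and $|B|=n-k$, the $A$--$B$ edges form a spanning subgraph isomorphic to $K_{k,n-k}$, as required.

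I do not expect any serious obstacle here, since the argument is essentially an unpacking of the definition. The one point that deserves care is the step in the necessity direction where the weight constraint, combined with the fact that condition~2 forces each color class to be nonempty, collapses every $V_i$ to a single vertex; it is this collapse that turns the domination condition into the full bipartite adjacency between $A$ and $B$, and without it one could not conclude that the edges present form a complete bipartite graph.
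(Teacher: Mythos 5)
Your proposal is correct and follows essentially the same route as the paper: the same explicit labeling for sufficiency, and for necessity the same use of a $k$RiDF-partition with nonempty $V_0$ forcing each $V_i$ to be a singleton, whence the $A$--$B$ adjacencies give the spanning $K_{k,n-k}$. Your write-up is in fact slightly more explicit than the paper's at the step where each $|V_i|=1$, but there is no substantive difference.
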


\begin{proof}
Let $G$ have order $n$ and assume that $n \ge k$.  We assume throughout that $V(G)=\{x_1,\ldots,x_n\}$.
Suppose first that $n=k$.  The function $f$ defined by $f(x_j)=j$ for $j \in [k]$
is clearly a $k$RiDF of $G$.  By Observation~\ref{ob:smallest} it follows that $f$ is a $\rik(G)$-function and thus $\rik(G)=k$.
Now suppose that the order of $G$ is $n>k$  and assume that $K_{k,n-k}$
is a spanning subgraph of $G$ with partite sets $\{x_1,\ldots,x_k\}$ and $\{x_{k+1},\ldots,x_n\}$.  For $i\in [k]$, let $V_i=\{x_i\}$
and let $V_0=\{x_{k+1},\ldots,x_n\}$.  It follows that $(V_0,V_1,\ldots,V_k)$ is a $k$RiDF-partition of $G$, and again
$\rik(G)=k$.
For the converse assume that $\rik(G)=k$ and that $n>k$.  Let $(V_0,V_1,\ldots,V_k)$ be a $k$RiDF-partition of $G$.
Suppose first that $V_0=\emptyset$.  This implies that $n=|V(G)|=\sum_{j=1}^k|V_j|=\rik(G)=k$, which is a contradiction.
Therefore, $V_0 \not=\emptyset$.  Each vertex in $V_0$ has at least one neighbor in $V_j$ for
each $j \in [k]$.  Since $\rik(G)=k$,  it follows that $|V_j|=1$ for each $j \in [k]$, and consequently $G$ has a spanning subgraph
isomorphic to $K_{k,n-k}$.
\end{proof}

 Let $(V_0,V_1,\ldots,V_k)$
be a $k$RiDF-partition of $G$.  If $x$ is any vertex of $G$ and
$\deg(x)<k$, then $x \not\in V_0$.  Thus, if we let $n_i$ denote the
number of vertices in $G$ that have degree $i$, then it follows
immediately that $\rik(G)\ge \sum_{i=1}^{k-1}n_i$.  In
particular, if $G$ has maximum degree $\Delta$ and $\Delta < k$, then
$\rik(G)=|V(G)|$, which is the maximal possible value for the
$k$-rainbow independent domination number of a graph.

\begin{ob}
If $k$ is a positive integer and $G$ is a graph of order $n$ such that $\Delta(G)<k$,
then  $\rik(G)=n$.
\end{ob}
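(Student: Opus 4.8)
The plan is to establish the two directions $\rik(G)\ge n$ and $\rik(G)\le n$ separately. The first is essentially forced by the degree condition already recorded just before the statement, while the second requires only that some $k$RiDF exists at all.

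For the lower bound I would take an arbitrary $k$RiDF-partition $(V_0,V_1,\ldots,V_k)$ and argue that $V_0=\emptyset$. Indeed, if some vertex $x$ belonged to $V_0$, then condition~2 of the definition would force $N(x)\cap V_i\neq\emptyset$ for each $i\in[k]$; selecting one such neighbor from each $V_i$ produces $k$ pairwise distinct vertices (the sets $V_1,\ldots,V_k$ being disjoint) all adjacent to $x$, whence $\deg(x)\ge k$. Since $\Delta(G)<k$ rules this out, no such $x$ exists and $V_0=\emptyset$. Consequently $w(f)=n-|V_0|=n$ for every $k$RiDF $f$, and so $\rik(G)\ge n$.

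For the upper bound I would exhibit a single $k$RiDF of weight $n$. Because $\Delta(G)\le k-1$, the standard greedy bound gives $\chi(G)\le\Delta(G)+1\le k$, so $V(G)$ admits a proper coloring with colors drawn from $[k]$. Taking $V_i$ to be the $i$-th color class and $V_0=\emptyset$ yields a partition in which each $V_i$ is independent (condition~1) and condition~2 holds vacuously; this is a $k$RiDF of weight $n$. Hence $\rik(G)\le n$, and combining the two inequalities gives $\rik(G)=n$.

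There is no genuinely hard step here: the content of the statement is carried entirely by the observation that a vertex of $V_0$ requires $k$ distinct neighbors. The only point deserving a word of care is the existence of a $k$RiDF at all, so that $\rik(G)$ is well defined and the upper bound is meaningful; this is why I invoke $k$-colorability explicitly rather than leaving it implicit, and it is the place where the hypothesis $\Delta(G)<k$ is used a second time.
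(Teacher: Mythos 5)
Your proof is correct and takes essentially the same route as the paper: the whole content in both is that a vertex of $V_0$ would need a neighbor in each of the $k$ pairwise disjoint sets $V_1,\ldots,V_k$, forcing its degree to be at least $k$, so $V_0=\emptyset$ and every $k$RiDF has weight $n$. Your explicit greedy-coloring construction of a $k$RiDF to certify $\rik(G)\le n$ is a reasonable extra precaution that the paper leaves implicit (existence also follows in general from the partial GID construction, and any $k$RiDF has weight $n-|V_0|\le n$), but it does not change the substance of the argument.
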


The aforementioned observation, which lead to the introduction of the $k$-rainbow independent domination concept, is the following.

\begin{prop} \label{ob:connection}
If $k$ is a positive integer and $G$  an arbitrary graph, then $\rik(G)=i(G \cart K_k)$.
\end{prop}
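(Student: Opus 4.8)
The plan is to establish a weight-preserving bijection between the $k$RiDFs of $G$ and the independent dominating sets of $G \cart K_k$; once that is in place, matching the minimum weight to the minimum cardinality gives the equality $\rik(G)=i(G \cart K_k)$ at once. The structural fact driving everything is that for each fixed $g \in V(G)$ the $K_k$-layer $^{g}\!K_k$ is a clique of order $k$, so any independent set of $G \cart K_k$ meets $^{g}\!K_k$ in at most one vertex. This is exactly the feature that lets a single integer label $f(g) \in \{0,1,\ldots,k\}$ encode ``which copy, if any, of $g$ is chosen.''

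First I would treat the forward direction. Given a $k$RiDF-partition $(V_0,V_1,\ldots,V_k)$ of $G$, define $D=\{(g,i) : g \in V_i,\ 1 \le i \le k\}$. By construction $D$ meets each $K_k$-layer at most once, so any adjacency between two vertices of $D$ would have to lie inside a common $G$-layer $G^i$; but that would force two adjacent vertices of $G$ to lie in the same $V_i$, contradicting that $V_i$ is independent. Hence $D$ is independent. For domination, take $(g,i) \notin D$: if $f(g) \ne 0$ then $(g,f(g)) \in D$ dominates $(g,i)$ within $^{g}\!K_k$, while if $g \in V_0$ then condition~2 of the definition supplies a neighbor $g' \in N(g) \cap V_i$, so $(g',i) \in D$ dominates $(g,i)$ within $G^i$. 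Thus $D$ is an independent dominating set with $|D|=\sum_{i=1}^{k}|V_i|=w(f)$.

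Next I would reverse the construction. Given an independent dominating set $D$ of $G \cart K_k$, the clique observation yields $|D \cap {}^{g}\!K_k| \le 1$ for every $g$, so I can define $f(g)=i$ when $(g,i) \in D$ and $f(g)=0$ otherwise, with partition $(V_0,\ldots,V_k)$. Independence of each $V_i$ follows because two adjacent vertices of $V_i$ would give two adjacent vertices of $D$ in the layer $G^i$. For condition~2, if $g \in V_0$ then no vertex of $^{g}\!K_k$ belongs to $D$, so each $(g,i)$ must be dominated from within $G^i$, which forces a neighbor $g'$ of $g$ with $(g',i) \in D$, i.e.\ $g' \in N(g) \cap V_i$. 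This $f$ is a $k$RiDF with $w(f)=|D|$, and the two constructions are mutually inverse.

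I expect no serious obstacle here; the argument is a careful but routine verification. The one point demanding attention is bookkeeping the two kinds of adjacency in $G \cart K_k$ (within a $K_k$-layer versus within a $G$-layer) and confirming that each of the two defining conditions of a $k$RiDF corresponds to exactly one of independence and domination of $D$. Because both directions are weight-preserving, taking minima over the bijection immediately yields $\rik(G)=i(G \cart K_k)$, and I would close the proof there.
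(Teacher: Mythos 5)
Your proposal is correct and follows essentially the same route as the paper: the same two weight-preserving constructions between $k$RiDF-partitions of $G$ and independent dominating sets of $G \cart K_k$, using the fact that each $K_k$-layer is a clique. Your write-up just spells out the forward-direction verification (which the paper leaves to the structure of the Cartesian product) and packages the two inequalities as a bijection.
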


\begin{proof}
Let $(V_0,V_1,\ldots,V_k)$ be a $k$RiDF-partition of $G$. We
define a subset $D$ of $V(G\Box K_{k})$ by
$D=\bigcup_{i=1}^k\{(g,i) : g \in V_i\}$. It follows from the structure of the Cartesian product that
$D$ is an independent dominating set in $G \cart K_k$. Thus $\rik(G)=\left|D\right| \geq i(G \cart K_k)$.

Now assume that $D$ is a smallest independent dominating set of $G\Box K_{k}$. For $i \in [k]$, let
$V_i=\{v\in V(G): (v,i)\in D\}$, and let $V_{0}=V(G)-(V_{1}\cup \cdots \cup V_{k})$. Note that the sets $V_i$
are well defined since the independence of $D$ implies that there is at most one vertex of $D$ in each $K_k$-layer.
Thus we have $V_i\cap V_j=\emptyset$ for $0\leq i< j \leq k$ and $V_0\cup V_{1}\cup \cdots \cup V_{k}=V(G)$.
In addition, for $1\leq  i\leq  k$, $V_i$ is an independent set (otherwise we obtain a contradiction with $D$ being
independent in $G\Box K_{k}$). Since $D$ is a dominating set in $G\Box K_{k}$, we have that for every $1\leq i\leq k$
and $x\in V_0$, $(x,i)$ is adjacent to some $(g,i)\in V_i\times \{i\}\subseteq D$, which implies that
$N(x)\cap V_i \neq \emptyset$. Therefore  $(V_0,V_1,\ldots,V_k)$ represents a $k$RiDF  of $G$ and we have
$\rik(G) \leq \left|D\right|=i(G \cart K_k)$.
\end{proof}

It follows immediately that

$$\rk(G)=\gamma(G \cart K_k)\leq i(G \cart K_k)=\rik(G)\leq \alpha (G \cart K_k).$$

For any positive integer $k$, the independent domination number and the independence number of a graph $G$ have a natural
relationship to $\rik(G)$.

\begin{prop} \label{prop:bounds}
Let $k$ be a positive integer.  If $G$ is any graph, then
\[i(G) \le \rik(G) \le k\alpha(G)\,.\]
\end{prop}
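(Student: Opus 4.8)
The plan is to prove the two inequalities separately. The upper bound follows quickly from the product interpretation already recorded, so the lower bound $i(G)\le\rik(G)$ is the substantive one; I would prove it directly from a minimum $k$RiDF-partition rather than through the product.

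For the upper bound $\rik(G)\le k\alpha(G)$, I would invoke Proposition~\ref{ob:connection} together with the displayed chain to write $\rik(G)=i(G\cart K_k)\le\alpha(G\cart K_k)$, so it suffices to show $\alpha(G\cart K_k)\le k\alpha(G)$. Let $S$ be a maximum independent set of $G\cart K_k$ and decompose it along the $G$-layers $G^1,\ldots,G^k$ as $S=\bigcup_{i\in[k]}(S\cap G^i)$. Each $S\cap G^i$ is an independent set in the copy $G^i\cong G$, hence $|S\cap G^i|\le\alpha(G)$, and summing over the $k$ layers gives $\alpha(G\cart K_k)=|S|\le k\alpha(G)$.

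For the lower bound I would fix a $k$RiDF-partition $(V_0,V_1,\ldots,V_k)$ realizing the minimum, so $\rik(G)=\sum_{i=1}^k|V_i|$. The naive move of taking $W=V_1\cup\cdots\cup V_k$ only produces a \emph{dominating} set of $G$ (each $x\in V_0$ has a neighbor in every $V_i$), and $W$ need not be independent; this bounds $\gamma(G)$, not $i(G)$, and is the wrong direction. Instead I would single out the one independent class $V_1$, which already dominates all of $V_0$. The only vertices that $V_1$ may fail to dominate lie in $R:=(V_2\cup\cdots\cup V_k)\setminus N[V_1]$. Taking $J$ to be a maximal independent set of the induced subgraph $G\langle R\rangle$ (which, being maximal, dominates $R$), I claim $V_1\cup J$ is an independent dominating set of the right size.

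I would then verify three things. Independence: $V_1$ and $J$ are each independent, and since $J\subseteq R$ avoids $N[V_1]$ no edge joins them, so $V_1\cup J$ is independent. Domination: every $x\in V_0$ has a neighbor in $V_1$, and every vertex of $V_2\cup\cdots\cup V_k$ is either in $N[V_1]$ (dominated by $V_1$) or in $R$ (dominated by $J$), so $V_1\cup J$ dominates $V(G)$. Size: as $J\subseteq R\subseteq V_2\cup\cdots\cup V_k$ is disjoint from $V_1$, we get $|V_1\cup J|=|V_1|+|J|\le|V_1|+|R|\le\sum_{i=1}^k|V_i|=\rik(G)$, whence $i(G)\le\rik(G)$. (The degenerate case $V_0=\emptyset$, where $\rik(G)=n\ge i(G)$ trivially, is handled separately; otherwise $V_0\neq\emptyset$ forces $V_1\neq\emptyset$.) The main obstacle is precisely that $V_1\cup\cdots\cup V_k$ is generally not independent; the key maneuver is that $V_1$ takes care of $V_0$ for free, so the repair only has to re-dominate the part of the remaining classes missed by $V_1$, and a maximal independent set of that induced part automatically stays non-adjacent to $V_1$ because it lives outside $N[V_1]$, keeping the union independent while respecting the weight budget.
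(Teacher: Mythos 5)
Your proof is correct, but both halves take routes that differ from the paper's. For the lower bound, the paper also starts from a minimum $k$RiDF-partition and repairs $V_1\cup\cdots\cup V_k$ into an independent dominating set, but it does so by a sequential greedy pruning: it keeps $V_1'=V_1$ and, for $i=2,\ldots,k$ in order, deletes from $V_i$ every vertex with a neighbour in $V_1'\cup\cdots\cup V_{i-1}'$, then takes $W=V_1'\cup\cdots\cup V_k'$; independence of $W$ comes from the independence of each class plus the deletions, and domination comes from the fact that each deleted vertex is adjacent to an earlier surviving class. Your one-shot version --- keep $V_1$, set $R=(V_2\cup\cdots\cup V_k)\setminus N[V_1]$, and adjoin a maximal independent set $J$ of $G\langle R\rangle$ --- achieves the same bound with a cleaner maximality argument and no induction over the classes; the two constructions generally produce different sets but the same inequality $i(G)\le |V_1|+|R|\le\rik(G)$. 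Your handling of the degenerate case is fine (and in fact the construction still works when $V_0=\emptyset$). For the upper bound the paper is more direct: since each $V_i$ is independent, $|V_i|\le\alpha(G)$, so $\rik(G)=\sum_{i=1}^k|V_i|\le k\alpha(G)$ immediately from the partition, with no need to pass through $\alpha(G\cart K_k)$. Your layer decomposition of a maximum independent set of $G\cart K_k$ is valid and proves the slightly stronger intermediate fact $\alpha(G\cart K_k)\le k\alpha(G)$, but it is a detour for the stated inequality.
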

\begin{proof}  The proposition is clearly true for $k=1$.  Suppose then that $k \ge 2$.  Let $(V_0,V_1, \ldots,V_k)$
be any $k$RiDF-partition of $G$.   By the definition of a $k$RiDF-partition it follows that $V_1$ dominates $V_0$
and that $V_i$ is independent for each $i \in [k]$.  Let $V_1'=V_1$.
For $2 \le i \le k$, starting with $i=2$ and proceeding through increasing values of $i$, let
$V_i'=V_i-\{ t \in V_i\,:\, N(t) \cap (V_1' \cup \cdots \cup V'_{i-1}) \not = \emptyset \}$.  Define
$W$ by $W=V_1'  \cup \cdots \cup V_k'$.   By its construction we see that $W$ is independent.  Let $x\in V(G)-W$.
If $x \in V_0$, then $x$ has a neighbor in $V_1$ (that is, a neighbor in $V_1'$) and hence $W$ dominates $x$.
If $x \in V_j - W$ for some $j\ge 2$,
then $x \in \{ t \in V_j\,:\, N(t) \cap (V_1' \cup \cdots \cup V'_{j-1}) \not = \emptyset \}$,
and consequently $x$ is dominated by $V_1' \cup \cdots \cup V'_{j-1}$ and hence also by $W$. This proves that $W$
is a dominating set of $G$.  We have shown that $W$ is an independent dominating set of $G$.  Furthermore,
for each $i \in [k]$, $V_i$ is independent.  This implies
\[i(G)\le |W| =\sum_{i=1}^k|V_i'| \le \rik(G)=\sum_{i=1}^k|V_i|\le k\alpha(G)\,.\]
\end{proof}

There are graphs $G$ that show the lower bound in Proposition~\ref{prop:bounds} is sharp.  For example, for any positive
integer $k \ge 2$ and any positive integer $m \ge 2$, let
$G$ be the complete multipartite graph $K_{k,n_2,\ldots,n_m}$ where $k \le n_2 \le \cdots \le n_m$.  Let
$\{a_1,\ldots,a_k\}$ be the partite set in $G$ of size $k$.  For each $i\in[k]$ let $V_i=\{a_i\}$ and let
$V_0=V(G)-\{a_1,\ldots,a_k\}$.  It is now easy to verify that $(V_0,V_1, \ldots,V_k)$ is a $k$RiDF-partition
of $G$ and that $i(G)=k=\rik(G)$.

Another class of graphs that attain the lower bound in Proposition~\ref{prop:bounds} for $k=2$ were studied
by Hartnell and Rall~\cite{hr-2004}.  They gave a necessary and sufficient condition on a graph $G$ for
$\gamma(G)=\rd(G)$.  Such a graph has a minimum dominating set $D$ that can be partitioned as $D_1 \cup D_2$ such
that $V(G)-N[D_1]=D_2$ and $V(G)-N[D_2]=D_1$.  In addition, each of $D_1$ and $D_2$ is a $2$-packing (i.e. a subset of $V(G)$ in which all the vertices are in distance at least $3$ from each other) and $D$ is
independent.  This implies that $i(G)=\gamma(G)$ and gives  a $2$RiDF-partition $(V(G)-D,D_1,D_2)$ of $G$.
Two examples of this class of graphs are shown in Figure~\ref{fig:example1}. For $i\in [2]$ the set $D_i$ consists of the vertices
labeled $i$.
%%%%%%%%%%%%%%%%%%%%%%%%%%%%%%%%%%%%%%%%%%%%%%%%%%%%%%%%%%%%%%%%%%%%%%%%%%%%%

\begin{figure}[htb]
\begin{center}
\begin{minipage}{0.30\textwidth}
\begin{tikzpicture}[scale=.6,style=thick,x=1cm,y=1cm]
\def\vr{3.5pt} % \vr = vertex radius;
% define vertices
\path (-2,2) coordinate (u0); \path (-1,2) coordinate (u1); \path (0,2) coordinate (u2);
\path (1,2) coordinate (u3); \path (2,2) coordinate (u4); \path (3,2) coordinate (u5);
\path (-2,-2) coordinate (v0); \path (-1,-2) coordinate (v1); \path (0,-2) coordinate (v2);
\path (1,-2) coordinate (v3); \path (2,-2) coordinate (v4); \path (3,-2) coordinate (v5);
%  edges
\foreach \i in {2,...,5}
{  \draw (u0)--(v\i); \draw (u1)--(v\i);}
\foreach \i in {0,1,4,5}
{  \draw (u2)--(v\i); \draw (u3)--(v\i);}
\foreach \i in {0,1,2,3}
{  \draw (u4)--(v\i); \draw (u5)--(v\i);}
% draw vertices
\foreach \i in {0,...,5}
{  \draw (u\i)  [fill=white] circle (\vr); \draw (v\i)  [fill=white] circle (\vr);}
\draw[anchor = south] (u0) node {$1$}; \draw[anchor = south] (u1) node {$2$};
\draw[anchor = north] (v0) node {$1$}; \draw[anchor = north] (v1) node {$2$};
\end{tikzpicture}
\end{minipage}%
\begin{minipage}{0.30\textwidth}
\begin{tikzpicture}[scale=.6,style=thick,x=1cm,y=1cm]
\def\vr{3.5pt} % \vr = vertex radius;
\path (-3,0) coordinate (u0); \path (-1.5,2) coordinate (u1); \path (-1.5,1) coordinate (u2);
\path (-1.5,-1) coordinate (u3); \path (-1.5,-2) coordinate (u4); \path (0,0) coordinate (u5);
\path (1.5,1) coordinate (u6); \path (1.5,-1) coordinate (u7); \path (3,0) coordinate (u8);
% draw edges
\foreach \i in {1,...,4}
{  \draw (u0) -- (u\i);  \draw (u5) -- (u\i);}
\foreach \i in {6,7}
{  \draw (u8) -- (u\i);  \draw (u5) -- (u\i);}
% draw vertices
\foreach \i in {0,...,8}
{  \draw (u\i)  [fill=white] circle (\vr); }
\draw[anchor = east] (u0) node {$1$}; \draw[anchor = north] (u5) node {$2$}; \draw[anchor = west] (u8) node {$1$};
\end{tikzpicture}
\end{minipage}%
\end{center}
\vskip -0.6 cm
\caption{Examples with $i=\ri$.} \label{fig:example1}
\end{figure}

%%%%%%%%%%%%%%%%%%%%%%%%%%%%%%%%%%%%%%%%%%%%%%%%%%%%%%%%%%%%%%%%%%%%%%%%%%%%%%%
By examining the proof of Proposition~\ref{prop:bounds} we can see
that more can be said about any graph that attains the lower bound.  Indeed, for such a graph $V_i'=V_i$
for each $i \in [k]$.  This gives the following corollary.

\begin{cor} \label{independent}
Let $k$ be a positive integer.  If $G$ is a graph such that $i(G)=\rik(G)$, then for any $k$RiDF-partition
$(V_0,V_1, \ldots,V_k)$ of $G$, the set $V_1\cup \cdots \cup V_k$ is independent.
\end{cor}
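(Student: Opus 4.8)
The plan is to re-run the construction from the proof of Proposition~\ref{prop:bounds} and to track precisely where slack can enter its chain of inequalities. First I would fix an \emph{arbitrary} $k$RiDF-partition $(V_0,V_1,\ldots,V_k)$ of $G$; by definition its weight equals $\rik(G)$. Feeding this partition into the construction of Proposition~\ref{prop:bounds} yields sets $V_1'\se V_1,\ldots,V_k'\se V_k$ whose union $W=V_1'\cup\cdots\cup V_k'$ is an independent dominating set of $G$, and the inequalities established there read
\[i(G)\le |W|=\sum_{i=1}^k|V_i'|\le \sum_{i=1}^k|V_i|=\rik(G).\]
The key move is then to invoke the hypothesis $i(G)=\rik(G)$, which collapses this display into a string of equalities. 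In particular $\sum_{i=1}^k|V_i'|=\sum_{i=1}^k|V_i|$, and since $V_i'\se V_i$ for every $i$, the sums can agree only if $V_i'=V_i$ for each $i\in[k]$. This is exactly the remark the authors record immediately before the corollary.

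Next I would unwind the definition of $V_i'$ to turn $V_i'=V_i$ into a statement about edges. For $2\le i\le k$ we have $V_i'=V_i-\{t\in V_i : N(t)\cap(V_1'\cup\cdots\cup V'_{i-1})\ne\emptyset\}$, so the equality $V_i'=V_i$ forces the deleted set to be empty; that is, no vertex of $V_i$ has a neighbor in $V_1'\cup\cdots\cup V'_{i-1}=V_1\cup\cdots\cup V_{i-1}$. Hence for all indices $1\le j<i\le k$ there are no edges between $V_j$ and $V_i$. To finish, take two distinct vertices $u,v\in V_1\cup\cdots\cup V_k$: if they lie in a common part $V_i$ they are nonadjacent because $V_i$ is independent by the definition of a $k$RiDF, and if $u\in V_j$, $v\in V_i$ with $j<i$ they are nonadjacent by the edge-free conclusion just obtained. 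Therefore $V_1\cup\cdots\cup V_k$ is independent.

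I do not expect a genuine obstacle: essentially all of the content is already packaged inside the proof of Proposition~\ref{prop:bounds}, so the work is bookkeeping rather than new ideas. The one point that warrants care is the final implication, namely that $V_i'=V_i$ for all $i$ rules out \emph{cross}-edges between distinct parts and not merely edges within a single part; combining this cross-edge freeness with the per-part independence already guaranteed by the definition is what delivers independence of the whole union.
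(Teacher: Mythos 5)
Your proposal is correct and follows exactly the paper's route: the authors likewise observe that when $i(G)=\rik(G)$ the inequality chain in the proof of Proposition~\ref{prop:bounds} collapses, forcing $V_i'=V_i$ for each $i\in[k]$, so that the independent set $W$ constructed there is all of $V_1\cup\cdots\cup V_k$. Your explicit unwinding of the cross-edge condition is just a slightly longer way of saying that $W=V_1\cup\cdots\cup V_k$ is independent by construction.
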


We will have use of a concept first introduced by
Fradkin~\cite{f2009} who defined a {\it greedy independent
decomposition} (GID)  of a graph $G$ to be a partition $A_1,\ldots,
A_t$ of $V(G)$ such that $A_1$ is a maximal independent set in $G$,
and for each $2 \le i \le t$, the set $A_i$ is a maximal independent set in the
graph $G-(A_1\cup \cdots\cup A_{i-1})$.  The number of subsets in the partition, here $t$,
is called its {\it length}.  In what follows we do not require the last subset $A_t$ in the sequence to be non-empty and
when it is non-empty we do not require it to be independent.  With these modifications
we call such a sequence of subsets of $V(G)$  a {\it partial GID} of $G$.

Note that if $A_1,\ldots,A_t$ is a partial GID of $G$, then each of
$A_1, \ldots,A_{t-1}$ is an independent set of $G$,
and for each $i$ such that $1 \le i \le t-1$, $A_i$ dominates $A_t$.
This proves the correctness of the upper bound in the following
proposition. We choose a labeling of the parts of the partition to
fit with our notation.

\begin{prop} \label{prop:pGID}
 If $V_1,V_2,\ldots,V_k,V_0$ is a partial GID of $G$, then
\[\rik(G)\leq \sum_{i=1}^k|V_i|\,.\]
\end{prop}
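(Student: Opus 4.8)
The plan is to show that the ordered partition $(V_0,V_1,\ldots,V_k)$ obtained by relabelling the parts of the given partial GID is itself a $k$RiDF-partition of $G$. Once this is established the bound is immediate, since the corresponding function $f$ has weight exactly $\sum_{i=1}^k|V_i|$, and $\rik(G)$ is by definition the minimum weight over all $k$RiDFs, so $\rik(G)\le w(f)=\sum_{i=1}^k|V_i|$.

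To verify that $(V_0,V_1,\ldots,V_k)$ represents a $k$RiDF I would check the two defining conditions in turn, in both cases appealing to the observation recorded immediately before the statement. Taking $t=k+1$ and $A_i=V_i$ for $1\le i\le k$ with $A_{k+1}=V_0$, that observation says first that each of $A_1,\ldots,A_{t-1}$ is independent; in our labelling this is precisely that $V_1,\ldots,V_k$ are independent, which is condition~(1). It says second that each of $A_1,\ldots,A_{t-1}$ dominates $A_t$; here this reads $V_0\subseteq N[V_i]$ for every $i\in[k]$. The only point that needs a moment of care is converting domination into the neighbourhood condition~(2): because $V_0$ and $V_i$ are disjoint parts of the partition, no vertex of $V_0$ lies in $V_i$, so $V_0\subseteq N[V_i]$ forces $N(x)\cap V_i\ne\emptyset$ for every $x\in V_0$, which is exactly condition~(2).

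With both conditions confirmed, $(V_0,V_1,\ldots,V_k)$ is a $k$RiDF-partition and the inequality follows as above. I expect there to be essentially no real obstacle, since the substantive content---that the greedy maximality built into a (partial) GID simultaneously yields the independence of the first $k$ parts and their domination of the remainder---has already been isolated in the observation preceding the proposition. The sole subtlety worth spelling out is the disjointness step of the previous paragraph, which guarantees that domination of $V_0$ produces genuine neighbours in each $V_i$ rather than mere containment; everything else is a direct unwinding of the definition of a $k$RiDF.
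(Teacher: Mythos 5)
Your proposal is correct and follows essentially the same route as the paper: the authors also derive the bound directly from the remark preceding the proposition, namely that in a partial GID each of the first $k$ parts is independent (giving condition~(1)) and, by maximality, dominates the last part (giving condition~(2)), so that $(V_0,V_1,\ldots,V_k)$ is a $k$RiDF-partition of weight $\sum_{i=1}^k|V_i|$. Your extra care in converting domination of $V_0$ into the neighbourhood condition via disjointness of the parts is a correct (if minor) elaboration of what the paper leaves implicit.
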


If $G=K_n$ for any $n \ge k$, then by noting as  above that $\rik(G)=i(G \cp K_k)$,
it is easy to show that the bound in Proposition~\ref{prop:pGID} is sharp. That is, if
$n \ge k$, then $\rik(K_n)=i(K_n \cart K_k)=\min\{n,k\}=k$.  Indeed, in this case a complete graph of order
$n$ has a partial GID of length $k+1$, and all of the independent parts of the decomposition have order
1. However, if $k >n$, then $K_n$ does not have a partial GID of length $k+1$, and thus
Proposition~\ref{prop:pGID} is not applicable.  In this case, $\rik(K_n)=n$.
%%%%%%%%%%%%%%%%%%%%%%%%%%%%%%%%%%%%%%%%%%%%%%%%%%%%%%%%%%%%%%%%%%

\section{Some families of graphs and their $2$RiDF}

In this section we first give the value of the invariant $\ri$ on some  common classes of
graphs, and then we prove that $i(T)< \ri(T)$ for any nontrivial tree $T$.

Jacobson and Kinch~\cite{jk-1983} determined that for the path $P_n$ on $n$ vertices, $\gamma(P_n \cart K_2)=\left\lceil \frac{n+1}{2}
\right\rceil$, and Cort\'{e}s~\cite{cor} proved that $i\left( P_{n} \cart K_2 \right)=\left\lceil \frac{n+1}{2} \right\rceil$.  In the
language of rainbow domination this is equivalent to
\[ \gamma _{r2}\left( P_{n}\right) =\left\lceil \frac{n+1}{2} \right\rceil= \ri\left( P_{n}\right)\,,\]
(see \cite{bks-2007} and Proposition~\ref{ob:connection}).  Pavli\v{c} and \v{Z}erovnik~\cite{pz-2012} proved
that $\ri\left( C_{n}\right)= \left\lceil n/2 \right\rceil$ for $n$ congruent to either 0 or 3 modulo 4, while
$\ri\left( C_{n}\right)= \left\lceil n/2 \right\rceil+1$ for $n$ congruent to 1 or 2 modulo 4.  In addition,
the following values are easy to verify.
\begin{itemize}
\item For a star $S_n$ with $n \ge 3$, $\ri\left( S_{n}\right)=n-1$.
\item For $n\ge 2$, $\ri\left( K_{n}\right)=2$.
\item For a complete multipartite graph $K_{r_1,\ldots,r_k}$, where $2\leq r_1 \leq \ldots\leq r_k$,
$\ri\left(K_{r_1,\ldots,r_k} \right)=r_1=i\left(K_{r_1,\ldots,r_k}\right)$.
\end{itemize}

From Proposition \ref{prop:bounds} it follows that $i(G)\leq \ri(G)$. In the following we show that for trees we have strict inequality.
 \begin{lem}
\label{lemleaf}
If $x$ is a leaf of a tree $T$, then $i(T)=i(T-x)$ or $i(T)= i(T-x)+1$.
\end{lem}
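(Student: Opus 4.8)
The plan is to establish the two inequalities $i(T) \le i(T-x)+1$ and $i(T-x) \le i(T)$ separately, and then to observe that together they pin $i(T)$ down to one of the two claimed values. Throughout, let $y$ denote the unique neighbor of the leaf $x$ in $T$.

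For the upper bound $i(T) \le i(T-x)+1$, first I would take a minimum independent dominating set $I$ of $T-x$, so $|I| = i(T-x)$. The set $I$ is independent in $T$ as well, since deleting $x$ removes only edges incident to $x$. The only vertex of $T$ possibly left undominated by $I$ is $x$ itself. If $y \in I$ then $I$ already dominates $x$ and is independent and dominating in $T$, giving $i(T) \le i(T-x) \le i(T-x)+1$. Otherwise $y \notin I$, and I would add $x$ to $I$: the set $I \cup \{x\}$ is dominating in $T$, and it is independent because $x$'s only neighbor is $y \notin I$. Hence $i(T) \le |I|+1 = i(T-x)+1$.

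For the lower bound $i(T-x) \le i(T)$, I would start from a minimum independent dominating set $J$ of $T$ and produce an independent dominating set of $T-x$ of size at most $|J|$. If $x \notin J$, then $J \se V(T-x)$, and since $J$ dominates every vertex of $T-x$ using edges not incident to $x$, $J$ is already independent and dominating in $T-x$, so $i(T-x) \le |J|$. The delicate case is $x \in J$: then (by independence) $y \notin J$, and I would replace $x$ by $y$, setting $J' = (J \setminus \{x\}) \cup \{y\}$. This $J'$ lives in $T-x$ and has $|J'| \le |J|$. I expect this swap argument to be the main obstacle, since I must check both that $J'$ is independent in $T-x$ and that it still dominates $T-x$. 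Independence requires that $y$ has no neighbor in $J \setminus \{x\}$; this holds because any such neighbor would already have been dominated, but one must argue carefully that introducing $y$ creates no conflict, possibly by noting that if $y$ did have a neighbor $z \in J$ then $z$ and $x$ both dominate $y$ and one can instead simply delete $x$ without adding $y$. Domination requires that every vertex of $T-x$ formerly dominated by $x$ (namely $y$) is still dominated, which $y \in J'$ guarantees, while all other vertices retain their dominators from $J$.

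Finally, combining $i(T-x) \le i(T) \le i(T-x)+1$ forces $i(T)$ to equal either $i(T-x)$ or $i(T-x)+1$, which is exactly the statement. The cleanest write-up will handle the swap case by arguing that whenever $x \in J$ one may assume $y$ has no other neighbor in $J$ (otherwise delete $x$ outright), so that the single swap both preserves independence and maintains domination, and I would be careful to phrase the independence check so as not to inadvertently use that $T$ is a tree beyond the fact that $x$ is a leaf.
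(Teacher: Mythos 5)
Your proposal is correct and follows essentially the same route as the paper's proof: both directions are handled identically, including the key case split in the lower bound where one either deletes $x$ outright (when $y$ has another dominator in $J$) or swaps $x$ for $y$ (when $N(y)\cap J=\{x\}$). The tentative worry you raise about the swap is resolved exactly as the paper does it, so no gap remains.
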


\begin{proof}
Let $x$ be a leaf of a tree $T$ and $y$ its unique neighbor. For notational convenience let $T'=T-x$.  To prove the lemma we show
that $i(T') \leq i(T) \leq i(T')+1$. Let $M$ be an independent dominating set of $T$ of cardinality $i(T)$.
If $x \notin M$, then $M$ dominates $T'$ and is independent in $T'$, which implies $i(T')\leq \left|M\right|=i(T)$. Now suppose that $x \in M$.
If $M \setminus \{x\}$ dominates $T'$, then  $i(T')\leq \left|M\right|-1<i(T)$.  If  $M \setminus \{x\}$ does
not dominate $T'$, then  $N(y)\cap M=\{x\}$ and  $M'=\left(M \setminus \left\{x\right\}\right)\cup\left\{y\right\}$
is an independent dominating set of $T'$, which shows  $i(T')\leq \left| M'\right|=\left| M\right|=i(T)$.  This establishes the first inequality.
To prove the second inequality, let $J$ be an independent dominating set of $T'$ such that $i(T')=|J|$.  If $y \in J$, then $J$ also dominates
$T$ and thus $i(T) \leq |J|=i(T')$.  If $y \notin J$, then $J \cup \{x\}$ is an independent dominating set
of $T$, which implies $i(T) \leq |J|+1=i(T')+1$.  Therefore, $i(T-x) \leq i(T) \leq i(T-x)+1$.
\end{proof}

\begin{lem}\label{prop-tree2}
Let $x$ be a leaf of a nontrivial tree $T$.  If $T'=T-x$, then $\ri(T)=\ri(T')$ or
$\ri(T)=\ri(T')+1$.
\end{lem}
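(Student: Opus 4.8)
The plan is to prove the two-sided bound $\ri(T')\le\ri(T)\le\ri(T')+1$, mirroring the structure of Lemma~\ref{lemleaf} but replacing independent dominating sets with $2$RiDF-partitions. Let $x$ be the leaf and $y$ its unique neighbor, and write $T'=T-x$. The key observation I would exploit throughout is the degree restriction noted just after Proposition~\ref{prop:lowerchar}: since $\deg_T(x)=1<2$, any $2$RiDF-partition of $T$ must place $x$ in $V_1\cup V_2$, never in $V_0$. This single fact controls how $x$ interacts with any labeling and is what makes both inequalities go through.

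For the lower bound $\ri(T')\le\ri(T)$, I would start from a $\ri(T)$-function given by a partition $(V_0,V_1,V_2)$ and produce a $2$RiDF of $T'$ of no larger weight. Deleting $x$ can only create a problem at $y$: if $y\in V_0$, then $y$ needed a neighbor in each of $V_1,V_2$, and removing $x$ might destroy the one supplied by $x$. By the degree remark $x\in V_1\cup V_2$, say $x\in V_1$. If $y$ still has another neighbor in $V_1$ inside $T'$, the restricted partition already works and weight drops by one, giving $\ri(T')\le\ri(T)-1$. Otherwise $x$ was $y$'s only $V_1$-neighbor; then I would recolor $y$ by moving it from $V_0$ into $V_1$ (it is safe because its former role as a dominated vertex is vacated), keeping weight equal to $\ri(T)$ while restoring independence and the rainbow condition on $T'$. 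The case $y\notin V_0$ is immediate since then deleting $x$ only removes weight.

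For the upper bound $\ri(T)\le\ri(T')+1$, I would take a $\ri(T')$-partition $(V_0',V_1',V_2')$ and extend it to a $2$RiDF of $T$ by assigning $x$ a label. If $y\in V_1'$ (symmetrically $V_2'$), I would try to place $x$ in $V_2$; this keeps $V_2$ independent since $x$'s only neighbor is $y\in V_1'$, and it cannot violate any $V_0$-condition, costing one unit of weight. If $y\in V_0'$, then $y$ already has neighbors in both $V_1'$ and $V_2'$ within $T'$, so I can put $x$ into either color class (its independence is free, as $y\notin V_1'\cup V_2'$) at a cost of one, and $y$ remains properly dominated. In every case $w$ increases by exactly one, yielding $\ri(T)\le\ri(T')+1$.

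The main obstacle I anticipate is the careful casework in the lower bound, specifically ensuring that after deleting $x$ and possibly recoloring $y$, both defining conditions of a $2$RiDF are simultaneously preserved: moving $y$ into a color class must not break independence of that class, and must not orphan some other former-$V_0$ vertex that had relied on $y$. Since $y\in V_0$ in the delicate subcase, $y$ had no neighbors in its own target class to begin with, so independence is automatic; and as $y$ was itself dominated rather than dominating, no other vertex's rainbow condition depended on $y$. Verifying these two points rigorously is the crux, but once established the bound follows cleanly, and combining it with the upper bound completes the proof.
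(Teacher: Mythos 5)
Your proof is correct and is essentially the paper's argument: the paper proves the same two-sided bound $\ri(T')\le\ri(T)\le\ri(T')+1$ by passing to independent dominating sets of $T\cp K_2$ via Proposition~\ref{ob:connection}, and each of your moves (delete $x$ and, if $x$ was the only $V_1$-neighbor of $y\in V_0$, recolor $y$ into $V_1$; for the other direction, give $x$ a color different from the one $y$ carries) is the exact translation of the corresponding move on $M$ or $J$ in the prism. Working directly with $2$RiDF-partitions as you do is marginally more self-contained, but the case analysis is the same.
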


\begin{proof}
Let $T$, $x$ and $T'$ be as in the statement of the lemma, and let $y$ be the unique neighbor of $x$ in $T$.
Recall from Proposition~\ref{ob:connection} that $\ri(G)=i(G \cart K_2)$ for any graph $G$.  We will prove the lemma
by showing that $i(T\Box K_2)=i(T'\Box K_2)$ or $i(T\Box K_2)=i(T'\Box K_2)+1$.  First we shall prove
that $i(T\Box K_2)\leq i(T'\Box K_2)+1$. Let $J$ be an independent dominating set of  $T'\Box K_2$ such that $|J|=i(T'\Box K_2)$.  Since
$(y,1)$ and $(y,2)$ are adjacent in $T'\Box K_2$, at least one of them, say $(y,2)$, is not in $J$. The set $J\cup \left\{(x,2)\right\}$ is independent
in $T\Box K_2$ and dominates $T\Box K_2$. Hence $i(T\Box K_2)\leq |J|+1=i(T'\Box K_2)+1$.

Now we shall prove that $i(T'\Box K_2)\leq i(T\Box K_2)$. Let $M$ be an independent dominating set of $T\Box K_2$ of cardinality $i(T\Box K_2)$.
Since $\left\{(y,1), (y,2)\right\}$ is not a subset of $M$, exactly one of $(x,1)$ or $(x,2)$ is in $M$. Without loss of generality we assume $(x,1)\in M$.
If $(y,2)\in M$, then $M \setminus \left\{(x,1)\right\}$ is an independent dominating set of $T'\Box K_2$ and thus $i(T'\Box K_2)\leq|M|-1<i(T\Box K_2)$.
If $(y,2) \notin M$, then there exists $(z,2)\in N_{T'\Box K_2}(y,2)\cap M$.  If there also exists  $(w,1)\in N_{T'\Box K_2}(y,1)\cap M$, then
$M\setminus\left\{(x,1)\right\}$ is an independent dominating set of $T'\Box K_2$ and again $i(T'\Box K_2)\leq|M|-1<i(T\Box K_2)$. On the other hand,
if  $N_{T'\Box K_2}(y,1)\cap M=\emptyset$, then $\left(M\setminus\left\{(x,1)\right\}\right)\cup \left\{(y,1)\right\}$ is an
independent dominating set of $T'\Box K_2$, which implies  $i(T'\Box K_2)\leq|M|=i(T\Box K_2)$, and the proof is complete.
\end{proof}

\begin{ob}\label{leaf}
If $G$ is a graph without isolated vertices and $(V_0,V_1,V_2)$ is a $2$RiDF-partition of $G$, then every leaf of $G$ belongs to $V_1 \cup V_2$.
\end{ob}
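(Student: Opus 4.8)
The plan is to derive the conclusion directly from the defining condition of a $2$RiDF, exploiting that a leaf has exactly one neighbor. First I would fix an arbitrary leaf $x$ of $G$ and write $y$ for its unique neighbor, so that $N(x)=\{y\}$. The goal is then to rule out $x\in V_0$, since the partition structure immediately gives $x\in V_1\cup V_2$ once $x\notin V_0$ is established.

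Next I would argue by contradiction. Assume $x\in V_0$. Because $(V_0,V_1,V_2)$ represents a $2$RiDF, the second condition in the definition of a $k$RiDF (with $k=2$) forces $N(x)\cap V_1\neq\emptyset$ and $N(x)\cap V_2\neq\emptyset$. As $N(x)=\{y\}$, this would require $y\in V_1$ and simultaneously $y\in V_2$, contradicting $V_1\cap V_2=\emptyset$. Hence $x\notin V_0$, and therefore $x\in V_1\cup V_2$.

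Alternatively, the statement is an instance of the degree remark already recorded earlier in Section~2, namely that for any $k$RiDF-partition $(V_0,\ldots,V_k)$ a vertex $x$ with $\deg(x)<k$ cannot lie in $V_0$. Specializing to $k=2$ and to a leaf (which has $\deg(x)=1<2$) yields the claim at once. I do not anticipate any genuine obstacle: no appeal to the minimality of the weight is needed, and the only point requiring care is the observation that a single neighbor cannot meet both $V_1$ and $V_2$. The hypothesis that $G$ has no isolated vertices is not essential to this implication---it simply fixes the intended setting in which the observation will later be applied.
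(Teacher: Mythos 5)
Your argument is correct and is exactly the reasoning the paper intends: the observation is stated without proof, but it follows from the remark earlier in Section~2 that a vertex of degree less than $k$ cannot lie in $V_0$, which is precisely your specialization to $k=2$ and a degree-one vertex. Both your direct contradiction (the single neighbor $y$ cannot lie in both $V_1$ and $V_2$) and your appeal to that remark match the paper's implicit proof.
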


The above results help us to show that for any non-trivial tree $T$ the   $2$-rainbow independent domination
number  always exceeds the independent domination number.

\begin{thm}
If $T$ is a non-trivial tree, then $i(T)<\ri(T)$.
\end{thm}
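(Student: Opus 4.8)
The plan is to induct on the order $n=|V(T)|$. For the base case $T=P_2=K_2$ one has $i(T)=1<2=\ri(T)$. For the inductive step I would fix a leaf $x$ of $T$ with neighbour $y$ and set $T'=T-x$; since $n\ge 3$, $T'$ is again a nontrivial tree, so the inductive hypothesis gives $i(T')+1\le \ri(T')$. Combining $i(T)\le i(T')+1$ from Lemma~\ref{lemleaf} with $\ri(T')\le \ri(T)$ from Lemma~\ref{prop-tree2} then yields
\[
i(T)\le i(T')+1\le \ri(T')\le \ri(T).
\]
Hence $i(T)\le \ri(T)$, and the only way strictness can fail is if all three inequalities are equalities, i.e.\ $i(T)=i(T')+1$, $\,i(T')+1=\ri(T')$, and $\ri(T')=\ri(T)$ hold simultaneously. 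The entire proof reduces to excluding this one ``bad case''.

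As a preliminary step I would record a small fact about leaves: \emph{if $i(T)=i(T')+1$, then $y$ lies in no minimum independent dominating set of $T'$}. Indeed, were $J$ a minimum independent dominating set of $T'$ with $y\in J$, then $J$ would still be independent in $T$ and would dominate $x$ through $y$, making $J$ an independent dominating set of $T$ with $|J|=i(T')<i(T)$, a contradiction. This fact converts the combinatorial obstruction into a concrete target: to break the bad case it is enough to exhibit a minimum independent dominating set of $T'$ that contains $y$.

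To kill the bad case I would assume $i(T)=\ri(T)$, so all displayed equalities hold; in particular $i(T')=\ri(T')-1$ and $\ri(T')=\ri(T)$. Take any $\ri(T)$-partition $(V_0,V_1,V_2)$; by Observation~\ref{leaf} the leaf $x$ lies in $V_1\cup V_2$, say $x\in V_1$. Deleting $x$ from $V_1$ would restrict $(V_0,V_1,V_2)$ to a $2$RiDF of $T'$ of weight $\ri(T)-1<\ri(T')$ unless the restriction is invalid; since $N_T(x)=\{y\}$, the only possible defect is that $y\in V_0$ and $x$ is the \emph{unique} neighbour of $y$ in $V_1$. Because $\ri(T')=\ri(T)$ forbids any lighter $2$RiDF of $T'$, this must indeed hold, and $y\in V_0$ also has some neighbour $z\in V_2$. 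I would then recolour: put $V_1''=(V_1\setminus\{x\})\cup\{y\}$, $V_2''=V_2$, $V_0''=V_0\setminus\{y\}$. Using that $x$ is $y$'s only $V_1$-neighbour and that $x$'s only neighbour is $y$, one checks $(V_0'',V_1'',V_2'')$ is a valid $2$RiDF of $T'$ of weight $\ri(T)=\ri(T')$, hence a $\ri(T')$-function, in which $y\in V_1''$ is adjacent to $z\in V_2''$, so $V_1''\cup V_2''$ is not independent. Applying the extraction procedure from the proof of Proposition~\ref{prop:bounds} to this function deletes $z$ (and possibly more) from the second class but retains all of $V_1''$, producing an independent dominating set $W$ of $T'$ with $y\in W$ and $|W|\le \ri(T')-1=i(T')$. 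Thus $W$ is a minimum independent dominating set of $T'$ containing $y$, contradicting the preliminary fact and eliminating the bad case.

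I expect the main obstacle to be exactly this bad case. The three individual estimates in the displayed chain are mutually compatible, so the inductive inequalities alone give only $i(T)\le\ri(T)$; the real work is to manufacture, from a weight-$\ri(T)$ partition, a \emph{minimum} independent dominating set of $T'$ passing through $y$. The recolouring that forces $y$ adjacent to a vertex of the opposite class (so that the extraction in Proposition~\ref{prop:bounds} strictly lowers the weight) is the delicate point, and it is here that the leaf $x$ and the tree structure are used essentially.
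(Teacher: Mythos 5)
Your proof is correct, and while its skeleton coincides with the paper's (induction on a leaf $x$ with neighbour $y$, using Lemma~\ref{lemleaf} and Lemma~\ref{prop-tree2} to reduce everything to the one configuration $i(T)=i(T')+1$ and $\ri(T)=\ri(T')$, the paper's Case~4), your disposal of that critical configuration takes a genuinely different route. The paper stays inside $T$: assuming $i(T)=\ri(T)$, it invokes Corollary~\ref{independent} to conclude that $V_1\cup V_2$ is independent for a $\ri(T)$-partition, places $x\in V_1$, $y\in V_0$, $z\in V_2\cap N(y)$, and exhibits the strictly smaller independent dominating set $I=\{y\}\cup\{t\in V_1\cup V_2: d(y,t)\ge 2\}$ of $T$, using the tree structure (a vertex at distance two from $y$ cannot have both its $V_1$- and $V_2$-neighbour inside $N(y)$) to verify domination. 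You instead descend to $T'$: the restriction argument forces $y\in V_0$ with $x$ its unique $V_1$-neighbour, the recolouring produces a $\ri(T')$-function whose classes are not jointly independent, and the extraction of Proposition~\ref{prop:bounds} then yields a \emph{minimum} independent dominating set of $T'$ containing $y$, contradicting your (correct) preliminary fact that no such set exists when $i(T)=i(T')+1$. Your argument uses the full strength of the inductive hypothesis $i(T')+1\le\ri(T')$, which the paper's Case~4 does not need, and in exchange avoids both Corollary~\ref{independent} and the distance-two domination check; it also trims the base case to $n=2$, since your inductive step already covers $n\in\{3,4\}$. Both arguments are complete; each step of yours (the validity of the restricted and recoloured partitions, the deletion of $z$ in the extraction, and the resulting equality $|W|=i(T')$) checks out.
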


\begin{proof}
The proof is by induction on the order of the tree. It is straightfoward to show the statement holds for all trees of
order  2, 3, or 4.  Let $n$ be an integer such that $n\ge 5$ and assume the theorem is true for all trees of order less than $n$.
Let $T$ be a tree of order  $n$, let $x$
be a vertex of degree $1$ in $T$ and let $y$ be the unique neighbor of $x$. Let $T'$ denote the subtree $T-x$ of $T$.
By the induction hypothesis $i(T')<\ri\left( T'\right)$. Using Lemma~\ref{lemleaf} and Lemma~\ref{prop-tree2} we get four cases.\\
Case 1.  $i(T)=i(T')$ and $\ri\left(T\right)=\ri\left(T'\right)$. \\
Case 2.  $i(T)=i(T')+1$ and $\ri\left(T\right)=\ri\left(T'\right)+1$. \\
Case 3.  $i(T)=i(T')$ and $\ri\left(T\right)=\ri\left(T'\right)+1$. \\
Case 4.  $i(T)=i(T')+1$ and $\ri\left(T\right)=\ri\left(T'\right)$. \\
It is easy to observe and is left to the reader to show  that in the first three cases  $i(T)<\ri\left( T\right)$.
Just using the equations in Case 4 it could happen that $i(T)=\ri\left( T\right)$.  We now show that this leads to a contradiction.
Let $f:V(T) \to \{0,1,2\}$  be a $\ri$-function and $(V_0,V_1,V_2)$ the resulting $2$RiDF-partition of $T$.  By Observation~\ref{leaf}, $x \in V_1 \cup V_2$
and by Corollary~\ref{independent} the set $V_1\cup V_2$ is independent. Without loss of generality we may assume that $x \in V_1$.
This implies that $y \in V_0$ and also that there exists a vertex $z\in V_2 \cap N_T(y)$.    Let $I=\{y\} \cup \{t \in V_1 \cup V_2 : d(y,t) \ge 2\}$.
Clearly, $I$ is independent and $|I|<|V_1 \cup V_2|=i(T)$.  We claim that $I$ dominates $T$.  By the definition of $I$, it suffices to show that $I$ dominates
the set $\{t\in V(T) : d(y,t)=2\}$.  Let $w$ be a vertex such that $d(y,w)=2$ and $w \notin I$.  Thus, $w \in V_0$.  Since $(V_0,V_1,V_2)$  is a $2$RiDF-partition
of $T$, $w$ has a neighbor $a$ in $V_1$ and a neighbor $b$ in $V_2$.  At most one of $a$ or $b$ is in $N_T(y)$ and it follows that $w$ is dominated by $I$.
This is a contradiction, and thus in all cases, $i(T)<\ri\left( T\right)$.  The theorem follows by induction.
\end{proof}

%%%%%%%%%%%%%%%%%%%%%%%%%%%%%%%%%%%%%%%%%%%%%%%%%%%%%%%%%%%%%%%%5
\section{A Nordhaus-Gaddum type of result for $\ri(G)$}

In 1956, Nordhaus and Gaddum~\cite{NH-1956} gave lower and upper bounds on the sum and the product of
the chromatic number of a graph and its complement, in terms of the order of the graph~\cite{NH-1956}. Relations of a similar type have been proposed
for many other graph invariants since then.  See the survey~\cite{NG-survey} by Aouchiche and Hansen on this topic.
An invariant not addressed in the survey was rainbow domination.  Wu and Xing~\cite{wx-2010} proved sharp Nordhaus-Gaddum type
bounds for the $2$-rainbow domination number.  In particular, they proved that if $G$ is any graph of order $n$ and $n \ge 3$, then $5 \le \gamma_{r2}(G)+\gamma_{r2}(\overline{G}) \le n+2$.

In this section we explore this relation for the $2$-rainbow independent domination number. Although our bounds are very
similar to those given by Wu and Xing for (ordinary) $2$-rainbow domination, the proof of our bounds turned out to be more involved
than theirs because of the requirement that $V_1$ and $V_2$ are independent sets in
the definition of a $2$RiDF-partition of $G$.  We will need the following lemmas to prove our main result Theorem~\ref{thm:sum}.

\begin{lem} \label{un} For any graph $G$ of order $n$,
$\ri(G)=n$ if and only if every connected component of $G$ is isomorphic either to $K_1$ or $K_2$. In addition, if
$\ri(G)=n$, then $\ri(\overline{G})=2$.
\end{lem}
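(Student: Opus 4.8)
The plan is to prove the biconditional in both directions and then establish the supplementary claim about $\ri(\overline{G})$. For the easy direction, I would first suppose that every connected component of $G$ is isomorphic to $K_1$ or $K_2$ and show $\ri(G)=n$. The key observation is the earlier Observation~\ref{leaf}: in any $2$RiDF-partition $(V_0,V_1,V_2)$ of a graph without isolated vertices, every leaf lies in $V_1\cup V_2$. Applying this componentwise, each vertex of a $K_2$-component is a leaf, so both its vertices must lie in $V_1\cup V_2$, and each isolated vertex (being of degree $0<2$) also cannot lie in $V_0$ by the degree argument given just before the second Observation in Section~2. Hence $V_0=\emptyset$, giving $w(f)=n-|V_0|=n$, so $\ri(G)=n$.

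For the harder converse, I would prove the contrapositive: if some component of $G$ is neither $K_1$ nor $K_2$, then $\ri(G)<n$. Such a component $H$ has a vertex of degree at least $2$, and since it is connected on at least $3$ vertices it contains a path $a\,b\,c$ with $b$ adjacent to both $a$ and $c$. The idea is to exhibit a $2$RiDF-partition with $V_0\neq\emptyset$, which forces $\ri(G)\le n-1<n$. The natural construction is to place $b$ in $V_0$ and assign $a$ to $V_1$ and $c$ to $V_2$, thereby rainbow-dominating $b$; every other vertex of $G$ is then distributed between $V_1$ and $V_2$ so that independence of each $V_i$ is preserved. The hard part will be ensuring that $V_1$ and $V_2$ can be made independent simultaneously while keeping $b$ (and only possibly a few vertices) in $V_0$; I expect the cleanest route is to invoke Proposition~\ref{prop:pGID}: one can start a partial GID $V_1,V_2,V_0$ by taking $V_1$ to be a maximal independent set containing $a$ but not $b$, then $V_2$ a maximal independent set of $G-V_1$ containing $c$, and letting $V_0$ be the remainder, arguing that $b$ survives into $V_0$ because it has the neighbor $a\in V_1$ and a neighbor in $V_2$. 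Proposition~\ref{prop:pGID} then yields $\ri(G)\le |V_1|+|V_2|\le n-1$.

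For the additional statement, assume $\ri(G)=n$, so by the equivalence just proved every component of $G$ is $K_1$ or $K_2$. I would show $\ri(\overline{G})=2$ by using Proposition~\ref{prop:lowerchar} with $k=2$: it suffices to check that $\overline{G}$ (of order $n$) either has order $2$ or contains a spanning subgraph isomorphic to $K_{2,n-2}$, and is connected. Since $G$ is a disjoint union of edges and isolated vertices, for $n\ge 3$ its complement is highly connected; I would pick two vertices $u,v$ that are non-adjacent in $\overline{G}$ (hence adjacent in $G$, i.e.\ forming a $K_2$-component, if one exists, otherwise any two vertices of distinct components) and verify that $\{u,v\}$ together dominate all remaining vertices in $\overline{G}$, yielding the required complete bipartite spanning subgraph. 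The main subtlety here is the low-order and degenerate cases (for instance $n=3$, or $G$ consisting entirely of isolated vertices versus containing an edge), which I would handle by a short direct check; one must also confirm $\ri(\overline{G})\ge 2$, which is immediate from the remark in Section~2 that $\rik(G)\ge k$ whenever the order exceeds $k$.
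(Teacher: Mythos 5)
Your forward direction and the closing claim about $\ri(\overline{G})$ are fine and essentially match the paper: for the forward direction the paper simply says ``clearly,'' and for the complement the paper writes down the weight-$2$ function on $\overline{G}$ explicitly rather than routing through Proposition~\ref{prop:lowerchar}, but the content is the same. The problem is the converse, which is the hard part of the lemma.

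The gap is in your use of Proposition~\ref{prop:pGID}. A partial GID requires $V_1$ to be a \emph{maximal} independent set of $G$ and $V_2$ a \emph{maximal} independent set of $G-V_1$, and this maximality can force $b$ out of $V_0$. Take $G=P_3$ with centre $b$ and leaves $a,c$. Every maximal independent set containing $a$ must also contain $c$, so $V_1=\{a,c\}$; then $b$ is isolated in $G-V_1$, so every maximal independent set of $G-V_1$ contains $b$, giving $V_2=\{b\}$ and $V_0=\emptyset$. The other choice $V_1=\{b\}$ forces $V_2=\{a,c\}$ and again $V_0=\emptyset$. Hence every length-$3$ partial GID of $P_3$ (and likewise of any star $S_n$) has empty last part, and Proposition~\ref{prop:pGID} yields only the vacuous bound $\ri(G)\le n$, even though these graphs have a component that is neither $K_1$ nor $K_2$. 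The partition you actually want, $V_1=\{a\}$, $V_2=\{c\}$, $V_0=\{b\}$, is a perfectly good $2$RiDF-partition but is \emph{not} a partial GID, because $\{a\}$ is not maximal; and in general you cannot prevent all of $b$'s neighbours from being absorbed into $V_1$, which then forces $b\in V_2$. This is exactly why the paper's argument is more laborious: it assumes for contradiction that $\ri(G)=n$ for a connected $G$ with a vertex $y$ of degree at least $2$, deduces that $G$ is bipartite with parts $V_1,V_2$, and then explicitly rebuilds a $2$RiDF $g$ with $g(y)=0$ by reassigning labels outward along the distance levels from $y$, verifying independence and domination level by level. Your converse needs to be replaced by an argument of that kind, or by some other construction that does not pass through maximal independent sets.
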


\begin{proof}  If every connected component of a graph $G$ of order $n$  is isomorphic to $K_1$ or $K_2$, then clearly $\ri(G)=n$.
Since $\ri(G)=\sum_{j=1}^l\ri(G_j)$ if $G_1,G_2,\ldots,G_l$ are the connected components of $G$, then to prove the
converse it suffices to show that $\ri(G)<n$ when $G$ is connected and $n \ge 3$. Thus, with the goal of
obtaining a contradiction suppose that $G$ is connected of order $n\ge 3$ and that $\ri(G)=n$.
There exists $y\in V(G)$ of degree at least $2$.
Let $f$ be a $\ri(G)$-function and let $(V_0,V_1,V_2)$ be the $2$RiDF-partition associated with $f$. Since $\ri(G)=n$,
we have $V_0=\emptyset$, and since for each $i\in [2]$,
there is no edge between two vertices from $V_i$, $G$ is a bipartite graph where $V_1$ and $V_2$ constitute a bipartition of $V(G)$. Assume
(without loss of generality) that $y\in V_1$. Then there exist $x,z\in N(y)\cap V_2$. Let $D_i=\{v\in V(G): d(v,y)=i\}$ and suppose the
eccentricity of $y$ (i.e., the largest distance of $y$ to a vertex from $V(G)$) is $t$. Then $\{y\}\cup\bigcup_{i=1}^t D_i=V(G)$. Note
that $E(G\langle D_i\rangle)=\emptyset$, and if $v\in D_i$, then $f(v)=1$ in the case when $i$ is even and $f(v)=2$  when $i$
is odd. We will show that a $2$RiDF, $g:V(G)\rightarrow \{0,1,2\}$ with $w(g)<w(f)$ can be constructed as follows.

Let $g(y)=0$, $g(x)=1$ and $g(z)=2$. Let $A=D_2 \cap (N(x)\setminus N(z))$, $B=D_2\cap (N(z)\setminus N(x))$, and $C=D_2 \cap N(x)\cap N(z)$.
Set $g(v)=2$ if $v\in A$,  $g(v)=1$ if $v\in B$, and  $g(v)=0$ if $v\in C$. Next, we define the following sets: $F$ is the set of vertices
$v\in D_1\setminus\{x,z\}$ such that $v$ has a neighbor in $A$ as well as a neighbor in $B$; $D$ is the set of vertices $v\in D_1\setminus\{x,z\}$
such that $v$ has a neighbor in $B$, but no neighbor in $A$; and $E$ is the set of vertices $v\in D_1\setminus\{x,z\}$ such that $v$ has a
neighbor in $A$, but no neighbor in $B$. Set $g(v)=2$ if $v\in D$,  $g(v)=1$ if $v\in E$, and  $g(v)=0$ if $v\in F$.
Now consider vertices in $D_1\setminus (\{x,z\}\cup D\cup E\cup F)$. None of these vertices is adjacent to a vertex in $A$. Hence we let $g(v)=2$
for every $v\in D_1\setminus (\{x,z\}\cup D\cup E\cup F)$. Next, we define $g$ on $D_2\setminus (A\cup B\cup C)$ according to the following procedure.

Suppose we have already assigned values of $g$ to vertices of a set $D_i$, $i\in \{1,\ldots, t-1\}$. Let $M=\{v\in D_i: g(v)=1\}$, $N=\{v\in D_i: g(v)=2\}$,
and $O=\{v\in D_i: g(v)=0\}$. Now consider vertices $w$ from $D_{i+1}$. If $w$ has a neighbor in $M$ and a neighbor in $N$, then $g(w)=0$. If $w$
has a neighbor in $M$, but not in $N$,  $g(w)=2$. If $w$ has a neighbor in $N$, but not in  $M$, $g(w)=1$. For the remaining vertices in $D_{i+1}$
 we set $g(w)=f(w)$. In this way we first assign values of $g$ to all vertices of $D_2\setminus (A\cup B\cup C)$, and then to all remaining vertices in $V(G)$.

It is straightforward to check that $g$ is a $2$RiDF on $G$. Since at least one vertex (that is $y$) has the value $0$ under $g$, we
have $w(g)\leq n-1$, as desired.

For the proof of the second part of the lemma, note that if $G$ is an empty graph on $n$ vertices, then $\overline{G}=K_n$ and
$\ri(\overline{G})=2$. If $\ri(G)=n$ and at least two vertices, say $a$ and $b$, are adjacent in $G$, then
$N_{\overline{G}}(a)=V(G)-\{a,b\}$ and $N_{\overline{G}}(b)=V(G)-\{a,b\}$.  Thus $h:V(G)\rightarrow \{0,1,2\}$, $h(a)=1$, $h(b)=2$ and
$h(c)=0$ otherwise, is a $2$RiDF with  minimum weight.
\end{proof}

\begin{lem} \label{star} If $G$ is a graph of order $n\geq3$ containing a universal vertex and $\ri(G)=n-1$, then $G$ is either
isomorphic to $S_n$ or $S^+_n$.
\end{lem}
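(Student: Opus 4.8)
The plan is to argue by the contrapositive. Fix a universal vertex $u$ and let $H=G-u$, a graph on $V(H)$ with $|V(H)|=n-1$. I will show that if $H$ has at least two edges, then $\ri(G)\le n-2$. Since $G$ has a universal vertex it is connected and, as $n\ge 3$, it is neither $K_1$ nor $K_2$; hence Lemma~\ref{un} already gives $\ri(G)\le n-1$. Establishing the contrapositive therefore shows that the hypothesis $\ri(G)=n-1$ forces $H$ to have at most one edge. Because $u$ is adjacent to every other vertex, $H$ having no edge means $G\cong S_n$, while $H$ having exactly one edge means $G\cong S_n^+$, which is the desired conclusion. Note also that for $n=3$ we have $|V(H)|=2$, so $H$ automatically has at most one edge and no separate argument is needed.

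The key reduction exploits universality. In any $2$RiDF-partition, a nonzero class containing $u$ can contain no other vertex, since $u$ is adjacent to all of them; this suggests isolating $u$ in one class and using the other to absorb $H$. Concretely, let $S$ be a minimum independent dominating set of $H$, and set $V_1=\{u\}$, $V_2=S$, and $V_0=V(H)\setminus S$. Then $V_1$ is trivially independent, and $V_2=S$ is independent in $G$ because $S$ is independent in $H$ and $u\notin S$. Each $x\in V_0$ is adjacent to $u\in V_1$ by universality and has a neighbor in $S\subseteq V_2$ because $S$ dominates $H$; thus $(V_0,V_1,V_2)$ is a $2$RiDF-partition of weight $|V_1|+|V_2|=1+i(H)$. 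This yields $\ri(G)\le 1+i(H)$.

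It then remains to prove the elementary estimate $i(H)\le |V(H)|-2=n-3$ whenever $H$ has at least two edges, since combined with the previous bound this gives $\ri(G)\le n-2$. I would establish it by producing a maximal independent set (hence an independent dominating set) that excludes at least two vertices. If $H$ has a vertex $v$ of degree at least $2$, extend $\{v\}$ to a maximal independent set $S$; its two or more neighbors then lie outside $S$. If instead every vertex of $H$ has degree at most $1$, then $H$ is a matching together with isolated vertices, and having two edges it contains two disjoint edges $ab$ and $cd$ with $a,c$ nonadjacent; extending $\{a,c\}$ to a maximal independent set excludes both $b$ and $d$. In either case the resulting independent dominating set omits at least two vertices, so $i(H)\le |V(H)|-2$.

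The main obstacle is not the counting but recognizing the correct reduction. The independence requirement on $V_1$ and $V_2$, which (as noted before Theorem~\ref{thm:sum}) is precisely what makes the Nordhaus--Gaddum analysis delicate, here works in our favor: it collapses the class holding the universal vertex to a single vertex and reduces the rest of the problem to ordinary independent domination of $H$. Once that observation is made, the chain $\ri(G)\le 1+i(H)\le 1+(n-3)=n-2$ contradicts $\ri(G)=n-1$, forcing $H$ to have at most one edge and completing the characterization.
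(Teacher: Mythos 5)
Your proof is correct, and it takes a genuinely different route from the paper's. You construct a cheap $2$RiDF directly: putting the universal vertex $u$ alone in $V_1$ and a minimum independent dominating set $S$ of $H=G-u$ in $V_2$ is a valid partition (every vertex of $V(H)\setminus S$ sees $u$ and sees $S$), giving the clean general bound $\ri(G)\le 1+i(G-u)$; you then finish with the elementary observation that any graph on $m$ vertices with at least two edges has a maximal independent set omitting at least two vertices, so $i(H)\le n-3$ and $\ri(G)\le n-2$. The paper instead starts from an optimal $\ri(G)$-function, first shows (by a swap argument) that one may assume the universal vertex gets label $0$, and then rules out two adjacent edges and two disjoint edges inside $N(u)$ separately, each time by an explicit relabelling of the optimal partition into a lighter $2$RiDF. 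Your approach buys brevity and avoids the case analysis on the structure of the optimal partition; the paper's approach stays closer to the machinery it reuses throughout Section 4 (modifying a given optimal partition), which is the style of argument needed again in the proof of Theorem~\ref{thm:sum}. All steps in your argument check out: $V_2=S$ is independent in $G$ because $H$ is an induced subgraph not containing $u$, a maximal independent set is an independent dominating set, and the two cases (a vertex of degree at least $2$ versus a matching with two edges) exhaust the ways $H$ can have two edges.
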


\begin{proof} Let $u$ be a universal vertex of $G$ and assume $\ri(G)=n-1$. We claim there is a $\ri(G)$-function $f$ for
which $f(u)=0$.  For suppose that  $(W_0,W_1,W_2)$ is a $2$RiDF-partition corresponding to a $\ri(G)$-function, and
$W_0=\{v\}$ for some $v \not=u$.  Without loss of generality we may assume that $u \in W_1$.  Since $W_1$ is independent and $u$ is a universal
vertex, it follows that $W_1=\{u\}$.  If $|N(v) \cap N(u)|=1$, then $G=S_n^+$.  Otherwise, let $r$ and $s$ be distinct vertices in
$W_2 \cap N(v)$.  Now, let $W_0'=\{u,v\}, W_1'=\{s\}$ and $W_2'=W_2-\{s\}$. We see that $(W_0',W_1',W_2')$ is a $2$RiDF-partition of $G$, which
contradicts our assumption that $\ri(G)=n-1$.  Thus, we may assume that such a $\ri(G)$-function $f$ exists, and let $(V_0,V_1,V_2)$
be the partition of $V(G)$ corresponding to $f$. Assume that $G\langle N(u)\rangle$ contains at least two edges. In the first case,
let $xy$ and $yz$ be edges in $G\langle N(u)\rangle$, and suppose (without loss of generality) that $f(x)=f(z)=1$.  Then  $f(y)=2$.
We will consider a function $g:V(G)\rightarrow \{0,1,2\}$, defined as follows. Let $A=V_2-N_G(z)$, let $B$ be the set of vertices in $V_2\cap N(z)$
having another neighbor in $V_1$ different from $z$, and let $C$  be the set of vertices in $V_2$ whose only neighbor in $V_1$ is $z$. Let $g(v)=2$ for
$v\in A\cup\{z\}$, $g(v)=0$ for $v\in B\cup\{u\}$, and $g(v)=1$ for $v\in C\cup (V_1-\{z\})$.
The function $g$ is a $2$RiDF with $w(g)\leq n-2$ (since $g(u)=0$ and $g(y)=0$) a contradiction. This implies that any two edges from
$G\langle N(u)\rangle$ are disjoint. Assume that $G\langle N(u)\rangle$ contains two such edges,  $xy$ and $ab$, and suppose
$f(y)=f(a)=1$ and $f(x)=f(b)=2$. Let $g:V(G)\rightarrow \{0,1,2\}$
be defined by $g(u)=1$, $g(v)=0$ for every $v\in V_1$ that has a neighbor in $V_2$, $g(v)=2$ for every $v\in V_1$ that does not have a neighbor in $V_2$, and
$g(t)=f(t)$ otherwise.  This function $g$ is a $2$RiDF, and since $g(y)=g(a)=0$, we obtain $w(g)\leq n-2$, a contradiction again with $\ri(G)=n-1$.
This implies that $G\langle N(u)\rangle$ contains at most one edge, which means that $G$ is either isomorphic to $S_n$ or to $S^+_n$.
\end{proof}

\begin{thm} \label{thm:sum} If $G$ is a graph of order $n$ where $n \ge 3$, then $$5\leq \ri(G)+\ri(\overline{G})\leq n+3,$$
and the bounds are sharp.
\end{thm}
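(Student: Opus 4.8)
The plan is to prove the two bounds separately, using the preparatory lemmas to pin down the extremal cases. For the lower bound $5 \le \ri(G)+\ri(\overline{G})$, I first note that for any graph $H$ of order $n \ge 3$ we have $\ri(H) \ge 2$, since $H$ has at least two vertices receiving nonzero labels (a single label cannot dominate in the rainbow independent sense). So the sum is at least $4$, and I must rule out the possibility that $\ri(G)=\ri(\overline{G})=2$. If $\ri(G)=2$, then by the characterization implicit in Proposition~\ref{prop:lowerchar} (with $k=2$), $G$ has a spanning subgraph isomorphic to $K_{2,n-2}$, meaning there are two vertices $a,b$ whose neighborhoods cover all of $V(G)$. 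I would argue that this forces $\overline{G}$ to contain an isolated or near-isolated structure that drives $\ri(\overline{G})$ up to at least $3$. Concretely, analyzing which vertices can simultaneously witness $\ri(G)=2$ and $\ri(\overline{G})=2$ should yield a contradiction for $n \ge 3$, establishing the strict bound $5$.

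For the upper bound $\ri(G)+\ri(\overline{G}) \le n+3$, the strategy is to dispose of the large-$\ri$ regimes using Lemma~\ref{un} and Lemma~\ref{star}. If $\ri(G)=n$, then Lemma~\ref{un} gives $\ri(\overline{G})=2$, so the sum is $n+2 \le n+3$. If $\ri(G)=n-1$, I would split according to whether $\overline{G}$ (equivalently, a suitable vertex configuration) produces a universal vertex; Lemma~\ref{star} tells me that a graph of order $n$ with a universal vertex and $\ri=n-1$ must be $S_n$ or $S_n^+$, whose complements I can analyze directly to verify the sum stays within $n+3$. The remaining, generic case is $\ri(G) \le n-2$, where by symmetry I also want $\ri(\overline{G}) \le n-2$ unless one of the two extremal situations just handled occurs; then $\ri(G)+\ri(\overline{G}) \le 2(n-2) = 2n-4$, which is at most $n+3$ exactly when $n \le 7$. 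This reveals that the honest work lies in the range $n \ge 8$, where $2n-4$ exceeds $n+3$ and the crude bound fails.

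The hard part will therefore be obtaining a genuinely subadditive estimate in the generic case for large $n$. Here I would build an explicit partition of $V(G)$ that serves \emph{both} $G$ and $\overline{G}$ simultaneously, exploiting the complementary relationship between the two graphs. The natural tool is the partial GID of Proposition~\ref{prop:pGID}: I would take a partial GID $V_1,V_2,V_0$ of $G$ and observe that the structure of maximal independent sets in $G$ corresponds to cliques in $\overline{G}$, so that a good labeling for $G$ constrains the labeling cost for $\overline{G}$. The key inequality to extract is something like $\ri(G)+\ri(\overline{G}) \le n+3$ by charging each vertex once to $G$ and once to $\overline{G}$ and showing the total overcount is bounded by a small constant. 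I expect the delicate bookkeeping to be controlling the few vertices that must be labeled nonzero in both $G$ and $\overline{G}$ — these are precisely the vertices that are neither well-dominated in $G$ nor in $\overline{G}$, and pinning down that their number is at most $3$ (matching the additive slack) is the crux of the argument.

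Finally, to show both bounds are sharp I would exhibit extremal families: for the lower bound, a graph where $\ri(G)=2$ and $\ri(\overline{G})=3$ (for instance built from a complete bipartite structure on the $K_{2,n-2}$ side), and for the upper bound a graph attaining $n+3$, likely a disjoint-union type construction where Lemma~\ref{un} or Lemma~\ref{star} controls one side while the other side contributes an $n-1$ or $n$ term plus a small correction. These example verifications are routine once the main inequalities are in place, so I would present them compactly after the two bounding arguments.
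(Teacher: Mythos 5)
Your lower-bound sketch is in the right spirit but stops short of the actual (two-line) argument: if $\ri(G)=2$ with $V_1=\{x\}$, $V_2=\{y\}$, then every vertex of $V_0$ is adjacent in $G$ to both $x$ and $y$, hence in $\overline{G}$ the pair $\{x,y\}$ has no neighbor in $V_0$ and cannot by itself dominate, forcing $\ri(\overline{G})\ge 3$. You gesture at this ("should yield a contradiction") without writing it down, but it is recoverable. The real gap is in the upper bound. Your case split by the value of $\ri(G)$ is a dead end: Lemma~\ref{star} only applies when $G$ has a universal vertex, so the case $\ri(G)=n-1$ without a universal vertex is uncovered, and the crude estimate $2(n-2)$ fails for $n\ge 8$, as you note. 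You then correctly name the right tool --- seed a partial GID of $\overline{G}$ from the $2$RiDF-partition of $G$ and invoke Proposition~\ref{prop:pGID} --- but you never extract the concrete inequality it yields. The paper's point is that since $V_1$ and $V_2$ are independent in $G$, they induce cliques in $\overline{G}$, so a maximal independent set $A$ of $\overline{G}$ through $x\in V_1$ and a maximal independent set $B$ of $\overline{G}-A$ through $y\in V_1$ each meet $V_1$ and $V_2$ in at most one vertex, giving $\ri(\overline{G})\le |A|+|B|\le |V_0|+4$ and hence $\ri(G)+\ri(\overline{G})\le n+4$ for all $n$ with no split on $n$. What you call "the crux" --- reducing the slack from $4$ to $3$ --- is precisely the content you have deferred, and it is the bulk of the proof: a multi-page case analysis of when $\ri(\overline{G})=|V_0|+4$ could hold, repeatedly constructing cheaper $2$RiDFs of $\overline{G}$ or of $G$ and invoking Lemma~\ref{un} and Lemma~\ref{star} to derive contradictions. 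No version of "charge each vertex once to each graph and bound the overcount" is exhibited, and I do not see how that scheme would produce the bound without essentially redoing the paper's case analysis.

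Your sharpness plan for the upper bound is also off target. A construction in which one side is controlled by Lemma~\ref{un} gives $\ri(G)=n$ and $\ri(\overline{G})=2$, i.e.\ sum $n+2$; similarly $G=S_n$ or $S_n^+$ gives $n+2$. The bound $n+3$ is attained in the "generic" regime, by the self-complementary graph $C_5$ (with $\ri(C_5)=4$), not by a disjoint-union or universal-vertex construction. The lower-bound sharpness claim is fine: any graph of order $3$ works.
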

\begin{proof}
Let $G$ be a graph of order $n$ such that $n \ge 3$.  First we consider the lower bound. Clearly,
$\ri(H)\geq 2$ for any graph $H$ of order at least 2, and thus  $\ri(G)+\ri(\overline{G})\geq 4$.
However, this bound cannot be achieved. Namely, if $\ri(G)= 2$ and $(V_0,V_1,V_2)$ corresponds to a $\ri(G)$-function of $G$, where
$V_1=\{x\}$ and $V_2=\{y\}$, then in $\overline{G}$ the set $\{x,y\}$ induces either $N_2$ or $K_2$, and none of the vertices
from $V_0$ is adjacent to $x$ or to $y$. Thus we have $\ri(\overline{G})\geq 3$, and the lower bound follows.

For the upper bound, let a partition $(V_0,V_1,V_2)$ of $V(G)$ correspond to a $\ri(G)$-function $f$ of $G$ (then $\{V_0,V_1,V_2\}$ is a
set partition of $V(\overline{G})$). Thus $\ri(G)=n-\left|V_0\right|$. If $\ri(G)=2$ then clearly
$\ri(G)+\ri(\overline{G})\leq n+2$.
Now consider the case when at least one of $V_1$ and $V_2$, say $V_1$, contains at least two vertices.
Let $x$ and $y$ be different vertices in $V_1$, which implies that $xy\in E(\overline{G})$. Let $A$  be a maximal independent set in
$\overline{G}$ containing $x$, and $B$ a maximal independent set in $\overline{G}-A$ containing $y$. Since $A,B,V(\overline{G})-(A\cup B)$
is a partial GID of $\overline{G}$, Propositon~\ref{prop:pGID} implies that $\ri(\overline{G})\leq \left|A \right|+ \left|B \right|$.
Since $V_1$ induces a complete subgraph in $\overline{G}$, we have $(A\cup B)\cap V_1=\{x,y\}$.  Similarly,  $V_2$ induces a complete
subgraph in $\overline{G}$, which implies there is at most one vertex in $A\cap V_2$ and at most one vertex in $B\cap V_2$. Thus
$$\ri(\overline{G})\leq \left|A \right|+ \left|B \right|\leq \left|V_0\right|+4,$$
which implies
\begin{equation}\label{n+4} \ri(G)+\ri(\overline{G})\leq n-\left|V_0 \right|+  \left|V_0\right|+4=n+4.\end{equation}

Now suppose that $\ri(G)+\ri(\overline{G})=n+4$ which is by (\ref{n+4}) equivalent to the fact that $\ri(\overline{G})= \left|V_0\right|+4$. The latter holds if and only if, for $A$ and $B$ defined as above, the function
$g:V(\overline{G})\rightarrow \{0,1,2\}$, $g(a)=1$ for every $a\in A$, $g(a)=2$ for every $a\in B$, and $g(a)=0$ otherwise, is a $\ri(\overline{G})$-function and $\left|V_2\cap A\right|=\left|V_2\cap B\right|=1$.  Let $s$ and $t$ be vertices in $V_2$ such that $s\in A$ and $t\in B$.  This means that for every $v\in V_0$, $g(v)\neq 0$. (Note that $V_0$ is nonempty for  otherwise
$\ri(G)=n$, which by Lemma~\ref{un} implies that $\ri(\overline{G})=2$ and leads to a contradiction.)
Also note that $\ri(\overline{G}\langle V_0\rangle)=\left|V_0\right|$, for otherwise we obtain a contradiction with $g$ being a $\ri(\overline{G})$-function. Thus  every connected component of $\overline{G}\langle V_0\rangle$  is either $K_1$ or $K_2$, by Lemma~\ref{un}.

Suppose first that $\overline{G}\langle V_0\rangle$ contains at least one edge.
We distinguish the following cases.

{\bf Case 1.} Suppose there exist $u,v\in V_0$ such that $uv\in E(\overline{G})$ and $N_{\overline{G}}(\{u,v\})-\{u,v\}=\emptyset$. Then $N_G(u)=V(G)-\{u,v\}$ and $N_G(v)=V(G)-\{u,v\}$, which implies that $\ri(G)=2$, thus $\ri(G)+\ri(\overline{G})\leq n+2$, a contradiction.

{\bf Case 2.} Thus for each pair of vertices  $u,v\in V_0$ such that $uv\in E(\overline{G})$ it follows $N_{\overline{G}}(\{u,v\})-\{u,v\}\neq \emptyset$.
Fix such an edge $uv$ in $\overline{G}\langle V_0\rangle$.
There exists an edge $uz\in E(\overline{G})$ such that $z\in V_1\cup V_2$.

First we claim that there is at most one such edge $uz$.
Suppose to the contrary that there are different vertices $z_1,z_2\in V_1\cup V_2$ such that $uz_1, uz_2\in E(\overline{G})$.
There are two possibilities, either $z_1$ and $z_2$ belong to the same one of these two sets $V_1$ and $V_2$ or they do not.

In either case we will reach a contradiction by constructing an $2$-rainbow independent
dominating function $g':V(\overline{G})\rightarrow \{0,1,2\}$ with  weight strictly less than $\left|V_0\right|+4$.  In both cases we first let $g'(u)=0$, $g'(z_1)=1$ and  $g'(z_2)=2$; next we assign values of $g'$ to all vertices from $V_1\cup V_2-\{z_1,z_2\}$; and lastly to all vertices in $V_0-\{u\}$.

Now we describe the procedure of assigning values to vertices in $V_1\cup V_2-\{z_1,z_2\}$.  First assume that $z_1,z_2$ belong to different sets $V_1$ and $V_2$, say $z_1\in V_1$ and $z_2\in V_2$. Consider vertices from $V_2-\{z_2\}$. If all of them are adjacent to $z_1$, then their value under $g'$ will be $0$. If there exist vertices in $V_2-\{z_2\}$ not adjacent to $z_1$, then one of them obtains the value $1$ and all others are assigned the value $0$. Analogously, we assign values of $g'$ to vertices in $V_1-\{z_1\}$.
Now assume that $z_1,z_2$ belong to the same set, say $V_1$. If there exist vertices $a,b\in V_2$ such that $az_1,bz_2\notin E(\overline{G})$, then $g'(a)=1$, $g'(b)=2$, and $g'(z)=0$ for every $z\in V_1\cup V_2-\{z_1,z_2,a,b\}$.
If all vertices in $V_2$ are adjacent to $z_2$ and there exist vertices in $V_2$ not adjacent to $z_1$, then one of them, say $a$, obtains value $1$ and all other vertices in $V_2-\{a\}$ as well as all vertices in $V_1-\{z_1,z_2\}$ obtain value $0$.  In an analogous manner, if all vertices in $V_2$ are adjacent to $z_1$ and there exist vertices in $V_2$ not adjacent to $z_2$, then one of them, say $b$, obtains value $2$ and all other vertices in $V_2-\{b\}$ as well as all vertices in $V_1-\{z_1,z_2\}$ obtain value $0$.
In the last case, when all vertices from $V_2$ are adjacent to both $z_1$ and $z_2$, we assign value $0$ to every vertex in $V_1\cup V_2-\{z_1,z_2\}$.  Note that in all cases at most four vertices in $V_1 \cup V_2$ have been
assigned a nonzero value under $g'$.

Now we assign values of $g'$ to all vertices in $V_0-\{u\}$. In the beginning let $C=\{z\in V_1\cup V_2: g'(z)=1\}$ and  $D=\{z\in V_1\cup V_2: g'(z)=2\}$. During the procedure  other vertices from $V_0$ may be added to the sets $C$ and $D$.  Order the vertices in $V_0-\{u\}$ arbitrarily.
By proceeding through this list we assign values under $g'$, one vertex $z$ at a time, in the following way.  If $z$  has a neighbor in $C$ as well as in $D$, then $g'(z)=0$. If $z$ has a neighbor in $D$ but not in $C$, then $g'(z)=1$ and we add $z$ to $C$.  If $z$ has a neighbor in $C$ but not in $D$, then $g'(z)=2$ and we add $z$ to $D$. If $z$ does not have a neighbor in $C \cup D$, then $g'(z)=1$ and we add $z$ to $C$.

The function $g'$ is a $2$RiDF of $\overline{G}$ and $w(g')< \left|V_0\right|+4$.  This proves the claim that if $u \in V_0$ and
$N_{\overline{G}}(u) \cap V_0 \not = \emptyset$, then $u$ has at most one neighbor in $V_1\cup V_2$.

Suppose there exist $u,v\in V_0$ such that $uv\in E(\overline{G})$ and there exist $z,w\in V_1\cup V_2$ such that $uz,vw\in E(\overline{G})$.  Observe that  $N_G(u)=V(G)-\{z,u,v\}$  and   $N_G(v)=V(G)-\{w,u,v\}$. Assume that $z\neq w$. One can easily verify that $f':V(G)\rightarrow \{0,1,2\}$, defined by $f'(u)=f'(z)=1,f'(v)=f'(w)=2$, and $f'(a)=0$ otherwise, is a $2$RiDF of $G$. This implies $\ri(G)\leq 4$, hence $\ri(\overline{G})\geq n$, which in fact gives  $\ri(\overline{G})= n$. By Lemma~\ref{un},  $\ri(G)= 2$, which leads to a contradiction. On the other hand, if $z=w$, then $f':V(G)\rightarrow \{0,1,2\}$, defined by $f'(u)=f'(z)=1,f'(v)=2$, and $f'(a)=0$ otherwise, is a $2$RiDF of $G$.  Hence $\ri(G)\leq 3$, and $\ri(G)+\ri(\overline{G})\leq n+3$, which is again a contradiction.

In the remaining case, suppose there exist  vertices $u$ and $v$ in $V_0$   such that $uv\in E(\overline{G})$ and exactly one of $u$ and $v$ is adjacent
in $\overline{G}$ to a vertex in $V_1\cup V_2$.  Without loss of generality we assume $uz\in E(\overline{G})$  and $z\in V_1\cup V_2$.
The function   $f':V(G)\rightarrow \{0,1,2\}$, defined by $f'(u)=f'(z)=1,f'(v)=2$, and $f'(a)=0$ otherwise, is a $2$RiDF of $G$. This
 implies $\ri(G)\leq 3$,  which is a contradiction.

The above two cases lead us to the conclusion that each component of $\overline{G}\langle V_0\rangle$  is an isolated vertex.  That is,
$V_0$ induces a complete subgraph in $G$.  Here the following possibilities arise.

{\bf Case a.} Assume that $N_{\overline{G}}(u)=\emptyset$, for every $u\in V_0$. If there are at least two vertices in $V_0$,
then $G$ has at least  two  universal vertices.  Thus
$\ri(G)=2$, which is a contradiction since $\ri(\overline{G})\leq n$.
On the other hand, if $V_0=\{u \}$, then $u$ is a universal vertex in $G$ and $\ri(G)=n-1$. Hence $G$ is either $S_n$ or
$S^+_n$ by Lemma~\ref{star}.  It follows that $\overline{G}-u$ has at least  two universal vertices since $V_1$ and $V_2$ both contain at least two vertices, which implies $\ri(\overline{G})\leq 3$. We conclude that  $\ri(G)+\ri(\overline{G})\leq n-1+3=n+2$,
which is a contradiction.

{\bf Case b.} In the last case assume that there exists an edge in $\overline{G}$ joining a vertex in
$V_0$ and a vertex in  $V_1\cup V_2$.

First we claim that each vertex in $V_0$ has at most one neighbor in $V_1\cup V_2$. Suppose to the contrary that for some
$u \in V_0$ there are distinct vertices $v_1,v_2\in V_1\cup V_2$ such that $uv_1, uv_2\in E(\overline{G})$.

In the two possibilities that arise we will construct an $2$-rainbow independent
dominating function $g':V(\overline{G})\rightarrow \{0,1,2\}$ with weight strictly less than $\left|V_0\right|+4$, which will lead to a contradiction.
Similar to how we proceeded in Case 2 above, we first let $g'(u)=0$, $g'(v_1)=1$ and  $g'(v_2)=2$; next we assign values of $g'$ to all vertices
from $V_1\cup V_2-\{v_1,v_2\}$; and lastly to all vertices in $V_0-\{u\}$.

Assume first that $v_1,v_2$ belong to different sets $V_1$ and $V_2$, say $v_1\in V_1$ and $v_2\in V_2$. Consider vertices from $V_2-\{v_2\}$. If all of them are adjacent to $v_1$, their value under $g'$ will be $0$. If there exist vertices in $V_2-\{v_2\}$ not adjacent to $v_1$, one of them obtains the value $1$ and all others the value $0$. In a similar manner we assign values of $g'$ to vertices in $V_1-\{v_1\}$.
Now assume that $v_1,v_2$ belong to the same set, say $V_1$. If there exist vertices $a,b\in V_2$ such that $av_1,bv_2\notin V(\overline{G})$, then $g'(a)=1$, $g'(b)=2$, and $g'(w)=0$ for every $w\in V_1\cup V_2-\{v_1,v_2,a,b\}$.
If all vertices in $V_2$ are adjacent to $v_2$ and there exist vertices in $V_2$ not adjacent to $v_1$, then one of them, say $a$, obtains value $1$ and all other vertices in $V_2-\{a\}$ as well as all vertices in $V_1-\{v_1,v_2\}$ obtain value $0$. Similarly, if all vertices in $V_2$ are adjacent to $v_1$ and there exist vertices in $V_2$ not adjacent to $v_2$, then one of them, say $b$, obtains value $2$ and all other vertices in $V_2-\{b\}$ as well as all vertices in $V_1-\{v_1,v_2\}$ obtain value $0$. In the last case, when all vertices from $V_2$ are adjacent to both $v_1$ and $v_2$, we assign value $0$ to every vertex in $V_1\cup V_2-\{v_1,v_2\}$.

What remains is to define $g'$ on $V_0-\{u\}$.   Let $C=\{z\in V_1\cup V_2: g'(z)=1\}$ and $D=\{w\in V_1\cup V_2: g'(w)=2\}$.  Fix
 a vertex $v$ in  $V_0-\{u\}$. If $v$ has a neighbor in $C$ as well as in $D$, then $g'(v)=0$. If $v$ has a neighbor in $D$ but not in $C$, then $g'(v)=1$.  If $v$ has a neighbor in $C$ but not in $D$, then $g'(v)=2$. For each $w \in V_0-\{u,v\}$ we let $g'(w)=g(w)$.

As in Case 2 above it is easy to see that the function $g'$ is a $2$RiDF of $\overline{G}$ and $w(g')< \left|V_0\right|+4$.
This contradiction proves the claim that if $u \in V_0$, then $u$ has at most one neighbor in $V_1\cup V_2$.

Suppose there exist at least two vertices in $V_0$, say $u$ and $v$, such that $uz,vw\in E(\overline{G})$  for some  $z,w\in V_1\cup V_2$.  Observe
that $N_G(u)=V(G)-\{u,z\}$ and $N_G(v)=V(G)-\{v,w\}$. Assume first that $z\neq w$. One can easily verify that $f':V(G)\rightarrow \{0,1,2\}$ defined by $f'(u)=f'(z)=1,f'(v)=f'(w)=2$, and $f'(a)=0$ otherwise, is a $2$RiDF on $G$. This implies $\ri(G)\leq 4$, hence $\ri(\overline{G})\geq n$ which in fact gives  $\ri(\overline{G})= n$. By Lemma~\ref{un},  $\ri(G)= 2$, which leads to a contradiction. On the other hand, if $z=w$, then $f':V(G)\rightarrow \{0,1,2\}$ defined by $f'(u)=f'(z)=1,f'(v)=2$, and $f'(a)=0$ otherwise, is a $2$RiDF of $G$.  Hence $\ri(G)\leq 3$, and $\ri(G)+\ri(\overline{G})\leq n+3$, which is again a contradiction.

Consequently, we are led   to the situation in the graph $\overline{G}$ in which  there is exactly one vertex $u$ in $V_0$ that has (exactly one) neighbor, say $v$  in $V_1$.  However, if $\left|V_0\right|\geq 2$, then $f':V(G)\rightarrow \{0,1,2\}$, $f'(u)=f'(v)=1$, $f'(z)=2$ for some $z\in V_0\setminus\{u\}$, and $f'(a)=0$ otherwise, is a $2$RiDF of $G$.  This is again a contradiction since $\ri(G)\leq w(f')= 3$.
Thus $u$ is the only vertex in $V_0$.

Recall that  $g(u)\neq 0$ and that there are vertices $x,y\in V_1$ and $s,t\in V_2$ such that $xy,st\in E(\overline{G})$ and $g(x)=g(s)=1$ and $g(y)=g(t)=2$. Thus $\ri(\overline{G})=5$. Note also that $xs,yt\in E(G)$. Let $W=\{w \in V_2 : N_G(w) \cap V_1 \not=\emptyset\}$.
Let $f':V(G)\rightarrow \{0,1,2\}$ be defined by  $f'(u)=2$, $f'(w)=0$ for every vertex $w\in W$, and  $f'(v)=1$ for every $v\in V_1\cup (V_2-W)$.
Since $W$ contains at least two vertices, namely $s$ and $t$, it is straightforward to show that $f'$ is a $2$-rainbow independent
dominating function of $G$ such that $w(f') \leq n-2$.  It follows that $\ri(G)\leq n-2$ and as a result $\ri(G)+\ri(\overline{G})\leq n+3$,
which is the final contradiction.

We have shown that the assumption on equality in (\ref{n+4}) leads to a contradiction, thus
\begin{equation}
\label{sharp}
\ri(G)+\ri(\overline{G})\leq n+3.
\end{equation}
Note that if $G$ is a cycle on $5$ vertices, the equality in (\ref{sharp}) is attained. Any graph of order 3 attains the lower bound.
\end{proof}

The only graphs that attain the lower bound in Theorem~\ref{thm:sum} have order 3.  Indeed, suppose that $G$ has order $n$ where
$n>3$ and that $\ri(G)+\ri(\overline{G})=5$.  We may assume without loss of generality that $\ri(G)=2$.
Let $(\{x_3,\ldots,x_n\},\{x_1\},\{x_2\})$ be a $2$RiDF-partition of $G$.  The vertices $x_1$ and $x_2$ are either isolated
or induce a component of order 2 in $\overline{G}$,
and it follows that $\ri(\overline{G})=2+\ri(\overline{G}\langle\{x_3,\ldots,x_n\}\rangle) \ge 4$ since $n \ge 4$.

\section{Concluding remarks}

In this paper we have presented a new domination concept and explored some of its basic properties. A number of natural  questions remain unanswered. One of these is whether the $5$-cycle is the only graph for which the upper bound is attained in the Nordhaus-Gaddum type inequality. Further, we observed (in Proposition \ref{prop:bounds}) that $i(G) \le \rik(G)$ for any graph $G$ and positive integer $k$, and presented some families of graphs for which the equality holds. In addition, we found a property that must hold if $i(G) = \rik(G)$ (see Corollary \ref{independent}). A characterization of graphs for which the latter equality holds remains open. It would also be interesting to explore algorithmic aspects of computing the $k$-rainbow independent domination number. It is quite likely that the problem of deciding if a graph has a $k$-rainbow independent dominating function of a given weight is NP-complete. However, it would be interesting to consider this question for specific families of graphs as well.

\section{Acknowledgements}

The first author was partially supported
by Slovenian research agency ARRS, program no. P1-0297,
project no. J1-7110. The
second author acknowledges support by a grant from the Simons Foundation
(\# 209654 to Douglas F. Rall). The last author was partially supported by Slovenian research agency ARRS, program no. P1-00383,
project no. L1-4292.

\end{document}